\newcommand{\disk}{\ensuremath{\mathbb{D}} } 
\newcommand{\riem}{\Sigma}
\newcommand{\sphere}{\overline{\mathbb{C}}}
\newcommand{\Gr}{\operatorname{Gr}}
\newcommand{\Grfull}{\operatorname{\mathbf{Gr}}}
        \newtheorem{theorem}{Theorem}[section]
        \newtheorem{definition}[theorem]{Definition}
    \newtheorem{remark}[theorem]{Remark}
\title{Dirichlet spaces of domains bounded by quasicircles}
\author{David Radnell}
\address{David Radnell \\ Department of Mathematics and Systems Analysis \\
Aalto University \\
P.O. Box 11100,  FI-00076 Aalto, Finland }
\email{david.radnell@aalto.fi}
\author{Eric Schippers}
\address{Eric Schippers \\ Department of Mathematics \\
University of Manitoba\\
Winnipeg, Manitoba \\  R3T 2N2 \\ Canada}
\email{eric\_schippers@umanitoba.ca}
\author{Wolfgang Staubach}
\address{Wolfgang Staubach\\ Department of Mathematics\\
Uppsala University\\
Box 480\\ 751 06 Uppsala\\ Sweden}
\email{wulf@math.uu.se}
\thanks{D. Radnell acknowledges support from the Academy of Finland. E. Schippers and W. Staubach are grateful for the financial support from the Wenner-Gren Foundations. E. Schippers is also partially supported by the National Sciences and Engineering Research Council of Canada.}
\begin{document}

\begin{abstract}
 Consider a multiply-connected domain $\Sigma$ in the sphere bounded by $n$ non-intersecting quasicircles.
 We characterize the Dirichlet space of $\Sigma$ as an isomorphic image of a direct sum of Dirichlet spaces of the
 disk under a generalized Faber operator.  This Faber operator is constructed using a jump formula for quasicircles and certain spaces of boundary values.

Thereafter, we define a Grunsky operator on direct sums of Dirichlet spaces of the disk, and give
 a second characterization of the Dirichlet space of $\Sigma$ as the graph of the generalized Grunsky
 operator in direct sums of the space $\mathcal{H}^{1/2}(\mathbb{S}^1)$ on the circle.
 This has an interpretation in terms of Fourier decompositions of Dirichlet space functions on the circle.
\end{abstract}

\maketitle

\begin{section}{Introduction}
 Let $\Sigma$ be a domain in $\sphere$ bounded by $n$ non-overlapping quasicircles $\Gamma_k$, $k=1,\ldots,n$.
 Assume that $\Sigma$ contains $\infty$.
 Let $\Omega_k^+$ and $\Omega_k^-$ denote the bounded and unbounded components of the complements of $\Gamma_k$ in the
 sphere $\sphere$, so that $\Sigma = \cap_{k=1}^n \Omega_k^-$.
 Let $f= (f_1,\ldots,f_n)$ be an $n$-tuple of quasiconformally extendible conformal maps
 $f_k:\disk^+ \rightarrow \Omega^+_k$, where $\disk^+ = \{ z\,:\,|z| < 1 \}$.

 Let $\mathcal{D}_\infty(\riem)$ denote the Dirichlet space of holomorphic functions on $\Sigma$ vanishing at $\infty$ and let $\mathcal{D}_\infty(\disk^-)$ denote the Dirichlet space of holomorphic functions on $\disk^- = \{ z\,:\,|z| >1 \} \cup \{\infty \}$ vanishing at $\infty$.
 For Cauchy projections $P(\Omega^-_k)$ from functions on $\Gamma_i$ into $\mathcal{D}_\infty(\Omega_k^-)$
 define
 \begin{align*}
  \mathbf{I}_f : \bigoplus^n \mathcal{D}_\infty(\disk^-) & \longrightarrow \mathcal{D}_\infty(\riem) \\
   (h_1,\ldots,h_n) & \longmapsto \sum_{k=1}^n P(\Omega^-_k) \mathcal{C}_{f^{-1}_k} h_k
 \end{align*}
 where $\mathcal{C}_{f^{-1}_k} h_k = \left. h_k \circ f_k^{-1} \right|_{\Gamma_k}$.  
 We show that $\mathbf{I}_f$ is defined and is an isomorphism.  As an application, we show that Dirichlet space functions have a unique multiple Faber series approximation which converges uniformly on compact subsets of  $\riem$.  

 It is far from obvious that the composition and projection operators are well-defined and bounded,
 since $f_k$ maps into the complement of $\riem$ for each $k$.
 These problems were resolved by E. Schippers and W. Staubach in  \cite{SchippersStaubach_jump,SchippersStaubach_Grunsky_quasicircle} where  a theory of
 harmonic extension and a Plemelj-Sokhotski jump decomposition for quasicircles is developed.
  Since quasicircles are not necessarily	 rectifiable, to define the projection operator, we use a limiting integral along curves approaching $\Gamma$.  In \cite{SchippersStaubach_jump} it was shown that for quasicircles, the integral can be taken over limiting curves approaching $\Gamma$ from either the inside or the outside with the same result.  This is a key theorem which is necessary for the results obtained in the present paper. This in turn relied on the fact that for every Dirichlet-bounded harmonic function on one side of a curve, there is a Dirichlet-bounded harmonic function on the other side with the same boundary values, which we will call the ``reflection''; furthermore this correspondence is bounded. \\

 The isomorphism
 $\mathbf{I}_f$ itself can be thought of as a kind of Faber operator for multiply-connected domains.
 The Faber operator is of general interest in approximation theory \cite{Anderson}, \cite[Chapter VII]{Suetin_monograph}
 since it determines the Faber series approximation of a given function.  
    Different choices of regularity of the boundary and norm of the functions can be made (see the table on page 91 of \cite{Suetin_monograph}).
  The version of the Faber operator $\mathbf{I}_f$ given here is closely related, in the simply connected case, to Faber
  series for Dirichlet space functions \cite{Cavus,SchippersStaubach_Grunsky_quasicircle,Shen_Faber}.
 In this setting existence of Faber series was shown by A. \c{C}avu\c{s} \cite{Cavus}, and existence and uniqueness was proven by Y. Shen \cite{Shen_Faber}.  
  
  When treated as a map between Dirichlet spaces $\mathcal{D}_\infty(\disk^-)$ and $\mathcal{D}(\Omega^-)$, the Faber operator was shown by Schippers and Staubach \cite{SchippersStaubach_Grunsky_quasicircle} to be an isomorphism if and only if the curve is a quasicircle
  \cite{SchippersStaubach_Grunsky_quasicircle}.  This result was first obtained by Shen \cite{Shen_Faber}, with a somewhat different formulation: there, the isomorphism takes $\ell^2$ sequences to Faber series.  Later, H.Y.Wei, M.L. Wang and Y. Hu showed that for rectifiable Jordan curves, the Faber operator on Dirichlet space is an isomorphism if and only if the curve is a rectifiable quasicircle \cite{WeiWangHu}.  They also disproved a conjecture of Anderson that the Faber operator is a bounded isomorphism of Besov spaces $B^p$ for general Jordan curves.  The formulation of Schippers and Staubach as a map between
   Dirichlet spaces for general quasicircles (in terms of the
  composition operator and Cauchy-type projection) required the jump formula on quasicircles as well as the existence of a bounded reflection of Dirichlet-bounded harmonic functions, 
  obtained in \cite{SchippersStaubach_jump,SchippersStaubach_Grunsky_quasicircle}.     
  
  Dirichlet space seems to be the weakest ($L^2$-type) regularity for which this is possible for quasicircles.  Hardy space functions, for example, have only boundary values almost everywhere with respect to harmonic measure.  Since  sets of harmonic measure zero with respect to one side of a Jordan curve need not have harmonic measure zero with respect to the other side, the boundary value problem on the complement will not be well-posed.  Dirichlet space functions have boundary values except on a set of capacity
  zero.  This stronger result makes the problem well-posed.  
  
  For other recent results for Faber operators with different choices of regularity,  see for example Y. E. Y{\i}ld{\i}r{\i}r and R. \c{C}etinta\c{s} \cite{YildirirCetintas}  for Hardy-Orlicz and Smirnov-Orlicz spaces for Dini-smooth curves, or D. Gaier \cite{Gaier} where it is shown that the Faber operator can be unbounded
  for quasicircles with respect to the supremum norm, in contrast to its behaviour with respect to the Dirichlet norm. \\

We also define a generalized Grunsky operator corresponding to the aforementioned $n$-tuple of quasiconformally extendible maps $(f_1,\ldots,f_n)$.
 We show that our generalized Grunsky operator characterizes the Dirichlet space $\mathcal{D}(\riem)$
 of holomorphic functions with bounded Dirichlet energy in the following sense: up to constants, the set
 \begin{equation*} \label{eq:W_definition}
   \mathbf{W} = \left\{ \left(\left. h \circ f_1 \right|_{\mathbb{S}^1},\ldots,
 \left. h \circ f_n \right|_{\mathbb{S}^1} \right) \,:\, h \in \mathcal{D}(\riem) \right\}
 \subseteq \bigoplus^n \mathcal{H}(\mathbb{S}^1),   
 \end{equation*} 
 where $\mathcal{H}(\mathbb{S}^1)$ is the $H^{1/2}$ Sobolev space on $\mathbb{S}^1$, is
 the graph of our Grunsky operator.  This can be thought of as a characterization of the
 possible Fourier series on $\mathbb{S}^1$ obtained by pulling back elements of $\mathcal{D}(\riem)$.
 This characterization relates to the so-called Segal-Wilson Grassmannian, and has applications to two-dimensional conformal field theory and to Teichm\"uller theory, see D. Radnell, E. Schippers and W. Staubach \cite{RSS_contemp,RSS_period_genus_zero}. 
 
 As in the case of Faber operators, the regularity of the curve relates to the properties of the Grunsky operator in the simply connected case.  As is well
  known \cite{Pommerenkebook}, the Grunsky operator (formulated
 in terms of $\ell^2$ sequences) is bounded by a constant strictly less than one, precisely for quasicircles.
 We show that the generalized Grunsky operator is strictly less than one in norm for multiply-connected domains bounded by quasicircles.  It was shown independently by Shen \cite{ShenGrunsky} and L. Takhtajan and L.- P. Teo \cite{TakhtajanTeo_memoirs} that a Jordan curve is a Weil-Petersson class quasicircle if and only if the
 Grunsky operator is Hilbert-Schmidt and bounded by one.  Shen also proved that 
 the Grunsky operator is compact if and only if the curve corresponds to a univalent 
 function with asymptotically conformal quasiconformal extension \cite{Shen_Faber}.
 \\

Here is an outline of the paper.
In Section \ref{se:reflection} we review some facts and results on the Dirichlet spaces, the harmonic reflection for quasicircles and the projection operators. In Section \ref{se:faber_grunsky} we explain the relation of the projection operators to the Grunsky operator given in the papers \cite{SchippersStaubach_jump,SchippersStaubach_Grunsky_quasicircle}, and
  formulate them in a way suitable for this paper.  We then prove
  the first main result that the Faber operator is an isomorphism 
  of Dirichlet spaces defined in \cite{SchippersStaubach_Grunsky_quasicircle}
  to multiply-connected domains bounded by quasicircles.   We apply this to show that elements of the 
  Dirichlet space have unique multiple Faber series, which converge uniformly on compact sets.  In Section \ref{se:Dirichlet_Grunsky} we define the generalized
  Grunsky operator, derive some integral expressions for its blocks, and show that it is bounded
  by some $k <1$ for multiply-connected domains bounded by quasicircles.  Finally we 
  show that $\mathbf{W}$ is the graph of the Grunsky operator.  

\end{section}

\begin{section}{Reflection of Dirichlet spaces in quasicircles} \label{se:reflection}
\begin{subsection}{Notation}
 Let $\disk^+ = \{ z\,:\,|z| <1 \}$, $\disk^- = \{ z\,:\,|z| >1 \} \cup \{ \infty \}$, $\mathbb{S}^1 = \{ z\,:\,|z| = 1 \}$, and $\sphere$ be the Riemann sphere $\mathbb{C} \cup \{ \infty \}$.  Complements are taken in the Riemann
 sphere.  Throughout the paper $\Gamma$ will be either a Jordan curve or specifically a quasicircle, not passing through $\infty$,
 and we denote the bounded component of its complement by $\Omega^+$ and the unbounded component
 by $\Omega^-$.  We will shorten this by saying $\Gamma$ bounds $\Omega^\pm$.

  For domains $\Omega_i$ in $\sphere$, $i=1,2$,
we say that $f:\Omega_1 \rightarrow \Omega_2$ is conformal if it is a meromorphic bijection. In particular,
$f$ has at most a simple pole.

\end{subsection}
\begin{subsection}{Dirichlet spaces}
 Let $\Omega$ be a Jordan domain in $\sphere$.  Throughout the paper, $\infty$ will not
 lie on the boundary of $\Omega$.  Using conformal invariance, the definitions below extend in an obvious way to domains
 whose boundaries contain $\infty$, although we will not require this.

 We define the harmonic Dirichlet space
\begin{equation}
 \mathcal{D}_{\mathrm{harm}}(\Omega)  = \left\{ h \colon \Omega \rightarrow \mathbb{C} \,:\,h \ \mbox{harmonic and} \ \iint_{\Omega} \left| \frac{\partial h}{\partial z}\right|^2 \,dA +
   \iint_{\Omega} \left|\frac{\partial h}{\partial \bar{z}} \right|^2 \,dA <\infty \right\}.
\end{equation}
If $\infty \in \Omega$, then the statement that $h$ is harmonic is taken to mean that $h$ is harmonic on $\Omega \backslash \{\infty \}$
and $h(1/z)$ is harmonic on an open set containing $0$. In a similar way, we define the notion of holomorphicity at $\infty$.  We define a semi-norm on $\mathcal{D}_{\mathrm{harm}}(\Omega)$ by
\begin{equation}\label{defn:norm in Dharm}
\| h \|^{2}_{\mathcal{D}_{\mathrm{harm}}(\Omega)}  = \iint_{\Omega}  \left| \frac{\partial h}{\partial z}\right|^2 \,dA
+ \iint_{\Omega} \left| \frac{\partial h}{\partial \bar{z}}\right|^2 \,dA.
 \end{equation}
This semi-norm is invariant under composition by M\"obius transformations; that is, for any M\"obius transformation $T$ one has
\begin{equation} \label{eq:Mobius_invariance}
 \| h \|_{\mathcal{D}_{\mathrm{harm}}(T(\Omega))} = \| h \circ T \|_{\mathcal{D}_{\mathrm{harm}}(\Omega)}.
\end{equation}
In particular, we have that $h \in \mathcal{D}_{\mathrm{harm}}(T(\Omega))$ if and only if $h \circ T \in \mathcal{D}_{\mathrm{harm}}(\Omega)$.

We denote the holomorphic Dirichlet spaces by
\[  \mathcal{D}(\Omega) = \{ h \in \mathcal{D}_{\mathrm{harm}}(\Omega)\,:\, h \text{ holomorphic} \}.  \]
The antiholomorphic Dirichlet space consisting of $\overline{h}$ such that $h \in \mathcal{D}(\Omega)$
will be denoted $\overline{\mathcal{D}(\Omega)}$.
The restriction of the semi-norm to $\mathcal{D}(\Omega)$ is given by
\[  \| h \|_{\mathcal{D}(\Omega)}^2 = \iint_{\Omega} |h'|^2 \,dA. \]

When $\Omega$ contains $\infty$,
we will also consider the subspace of those functions in $\mathcal{D}(\Omega)$ vanishing at $\infty$.
We will denote this by
{\[  \mathcal{D}_\infty(\Omega)= \{ h \in \mathcal{D}(\Omega) \,: h(\infty) = 0 \}.  \]}
On this space the seminorm is a norm.

Finally, we observe the following elementary conformal invariance.
 Let $\Omega_1$ and $\Omega_2$ be Jordan domains not containing $\infty$ in
 their boundaries.  Let $f:\Omega_1 \rightarrow \Omega_2$ be a conformal map.  The composition operator
 \begin{align*}
  \mathcal{C}_f: \mathcal{D}_{\mathrm{harm}}(\Omega_2) & \longrightarrow \mathcal{D}_{\mathrm{harm}}(\Omega_1) \\
  h & \longmapsto h \circ f
 \end{align*}
 is a linear isomorphism such that
 \begin{equation} \label{eq:composition_invariant}
 \| \mathcal{C}_f h \|_{\mathcal{D}_{\mathrm{harm}}(\Omega_1)} = \|   h \|_{\mathcal{D}_{\mathrm{harm}}(\Omega_2)}.
 \end{equation}
The restriction of the composition operator to the holomorphic Dirichlet spaces is defined in a similar way.

\end{subsection}

\begin{subsection}{Boundary values of Dirichlet space and harmonic reflection}

The Dirichlet spaces have boundary values in a certain conformally invariant sense originating
with Osborn \cite{Osborn}.  We summarize
  the results here; a full exposition and proofs can be found in \cite{SchippersStaubach_jump}.\\

Let $\Gamma$ be a
Jordan curve, not containing $\infty$, bounding a Jordan domain $\Omega$.  Here $\Omega$ could be either the bounded or unbounded component of the complement of $\Gamma$.
Fix any point $p \in \Omega$.
For any point $q \in \Gamma$ let $\gamma_{p,q}$ be the
hyperbolic geodesic arc connecting $p$ to $q$. For any
 $h \in \mathcal{D}_{\mathrm{harm}}(\Omega)$, the limiting values
  \[  H(q) = \lim_{z \rightarrow q} h(z), \quad z \in \gamma_{p,q}  \]
  exist for all $q$, except possibly on a set $K$ which is the image of a Borel set $I \subseteq \mathbb{S}^1$ of zero logarithmic capacity under a conformal map $f:\disk^+ \rightarrow \Omega$.  Since disk automorphisms take sets of logarithmic capacity zero in $\mathbb{S}^1$ to sets of logarithmic capacity zero, this is independent of the choice of $p$ and $f$.
  Let $\mathcal{H}(\partial \Omega)$ temporarily denote the set of functions on $\Gamma$ obtained in this way, where two
  functions are identified if they agree except on a set of type $K$.  Any $h \in \mathcal{H}(\partial \Omega)$ has a unique extension to $\mathcal{D}(\Omega)$ whose boundary values
  agree with $h$ except on a set of type $K$.

In fact $\mathcal{H}(\partial \Omega)$ is the set of boundary values of $\mathcal{D}_{\text{harm}}(\Omega)$
on the ideal boundary of $\Omega$, which can be identified with $\Gamma$ by the Carath\'eodory theorem.
We call the set of functions obtained as boundary values of $\mathcal{D}_{\text{harm}}(\Omega)$ in this way
the  {\it Douglas-Osborn space.}  We also call these boundary values in the sense of Osborn.  \\

Now assume that $\Gamma$ is a quasicircle. Let us denote the set of boundary values of $\mathcal{D}_{\text{harm}}(\Omega^\pm)$, by $\mathcal{H}_\pm(\Gamma).$
It is not clear whether $\mathcal{H}_+(\Gamma)$ and $\mathcal{H}_-(\Gamma)$ are the same
in general.  However,   on
quasidisks we have by a result in \cite[Theorem 2.14]{SchippersStaubach_jump} that
\[  \mathcal{H}_+(\Gamma) = \mathcal{H}_-(\Gamma)  \]
in the following sense.  Fix conformal maps $f:\disk^+ \rightarrow \Omega^+$ and $g:\disk^- \rightarrow \Omega^-$.  For any element $h_+ \in \mathcal{D}(\Omega^+)$, there is a unique $h_- \in \mathcal{D}(\Omega^-)$ whose boundary values agree with those of $h_+$ except on a set $K$ such that
$K=f(I)=g(J)$ where $I \subseteq \mathbb{S}^1$ and $J \subseteq \mathbb{S}^1$ are sets of logarithmic capacity zero.   Thus for quasicircles $\Gamma$ not containing $\infty$ we set
\[  \mathcal{H}(\Gamma) = \mathcal{H}_+(\Gamma) = \mathcal{H}_-(\Gamma)  \]
and make the following definition.
\begin{definition}  Let $\Gamma$ be a quasicircle not containing $\infty$.   For $h \in \mathcal{H}(\Gamma)$ let $\mathbf{E}_\pm h$ denote the unique elements of $\mathcal{D}(\Omega^\pm)$
whose boundary values in the sense of Osborn agree with those of $h$.
\end{definition}

\begin{remark}  In fact, the condition that $\mathcal{H}_+(\Gamma)=\mathcal{H}_-(\Gamma)$ characterizes quasidisks in a
 certain sense, see \cite[Theorem 2.14]{SchippersStaubach_jump}.
\end{remark}

Using $\mathbf{E}_\pm$, one can define semi-norms on $\mathcal{H}_\pm(\Gamma)$ by
\begin{equation}\label{eq: seminorms on h plusminus}
\| h \|_{\mathcal{H}_\pm(\Gamma)} = \| \mathbf{E}_\pm h \|_{\mathcal{D}_{\mathrm{harm}}(\Omega^\pm)},
\end{equation}
and one also has that there is a uniform constant $C$, depending
only on $\Gamma$, such that for any $h \in \mathcal{H}(\Gamma)$,
\[  \frac{1}{C} \| \mathbf{E}_+h \|_{\mathcal{D}(\Omega^+)_{\text{harm}}}
   \leq \| \mathbf{E}_- h \|_{\mathcal{D}(\Omega^-)_{\text{harm}}} \leq C  \| \mathbf{E}_+h \|_{\mathcal{D}(\Omega^+)_{\text{harm}}},  \]
see \cite[Equation (2.5)]{SchippersStaubach_jump}.

Observe that $\mathcal{H}(\Gamma)$ is conformally invariant in the following sense \cite[Theorem 2.21]{SchippersStaubach_jump}:\\

Let $\Gamma_1$ and $\Gamma_2$ be quasicircles not containing $\infty$, bounding domains $\Omega_1$ and $\Omega_2$.
A conformal map $f:\Omega_1 \rightarrow \Omega_2$ must have a quasiconformal extension to $\sphere$ since $\Gamma_i$
are quasicircles \cite{Lehto}. Moreover, quasiconformal maps preserve sets of zero logarithmic capacity.  Thus, we have a well-defined composition operator
\begin{align*}
 \mathcal{C}_f: \mathcal{H}(\Gamma_2) & \longrightarrow \mathcal{H}(\Gamma_1) \\
 h & \longmapsto h \circ f.
\end{align*}

\begin{theorem}  \label{th:Osborn_oneside_conf_inv}
 Let $\Gamma_1$ and $\Gamma_2$ be quasicircles not containing $\infty$, bounding
 domains $\Omega_1^\pm$ and $\Omega_2^\pm$.  If $f^\pm:\Omega_1^\pm \rightarrow \Omega_2^\pm$
  are conformal maps, then $\mathcal{C}_{f^\pm}$ are linear isomorphisms such that
  \[  \| \mathcal{C}_{f^\pm} h \|_{\mathcal{H}_{\pm}(\Gamma_1)} =  \|  h \|_{\mathcal{H}_{\pm}(\Gamma_2)}.
    \]
\end{theorem}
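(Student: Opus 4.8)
The plan is to reduce the statement, via the one-sided harmonic extension operator, to the conformal invariance \eqref{eq:composition_invariant} of the Dirichlet seminorm. We treat the ``$+$'' case; the ``$-$'' case is identical word for word, with $\Omega^+$ replaced by $\Omega^-$ and using the convention for harmonicity at $\infty$. Recall from Osborn's theory, as summarized above, that for a quasicircle $\Gamma$ bounding $\Omega^{\pm}$ the extension map $\mathbf{E}_+ : \mathcal{H}_+(\Gamma) \to \mathcal{D}_{\mathrm{harm}}(\Omega^+)$ is a linear bijection, isometric for the seminorm by the very definition \eqref{eq: seminorms on h plusminus}. Since $\Omega_1^+$ and $\Omega_2^+$ are quasidisks, a conformal $f^+ : \Omega_1^+ \to \Omega_2^+$ extends to a quasiconformal homeomorphism of $\sphere$ \cite{Lehto}, hence to a homeomorphism $\overline{\Omega_1^+} \to \overline{\Omega_2^+}$ carrying $\Gamma_1$ onto $\Gamma_2$; we denote this extension again by $f^+$. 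As recalled above, $\mathcal{C}_{f^+} : \mathcal{H}_+(\Gamma_2) \to \mathcal{H}_+(\Gamma_1)$ is then a well-defined linear bijection with inverse $\mathcal{C}_{(f^+)^{-1}}$.

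Everything reduces to the commutation identity
\begin{equation} \label{eq:extension_commutes}
 \mathbf{E}_+ \left( \mathcal{C}_{f^+} h \right) = \left( \mathbf{E}_+ h \right) \circ f^+ \qquad \text{on } \Omega_1^+,
\end{equation}
valid for every $h \in \mathcal{H}_+(\Gamma_2)$. Indeed, granting \eqref{eq:extension_commutes} and applying \eqref{eq: seminorms on h plusminus} on the two ends and \eqref{eq:composition_invariant} in the middle,
\[
 \| \mathcal{C}_{f^+} h \|_{\mathcal{H}_+(\Gamma_1)}
 = \| \mathbf{E}_+ ( \mathcal{C}_{f^+} h ) \|_{\mathcal{D}_{\mathrm{harm}}(\Omega_1^+)}
 = \| ( \mathbf{E}_+ h ) \circ f^+ \|_{\mathcal{D}_{\mathrm{harm}}(\Omega_1^+)}
 = \| \mathbf{E}_+ h \|_{\mathcal{D}_{\mathrm{harm}}(\Omega_2^+)}
 = \| h \|_{\mathcal{H}_+(\Gamma_2)},
\]
so $\mathcal{C}_{f^+}$ is an isometric linear bijection, in particular a bounded isomorphism with bounded inverse. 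To establish \eqref{eq:extension_commutes}, note first that $( \mathbf{E}_+ h ) \circ f^+$ is harmonic on $\Omega_1^+$, being a harmonic function composed with a holomorphic map, and has finite Dirichlet energy by \eqref{eq:composition_invariant}; hence it lies in $\mathcal{D}_{\mathrm{harm}}(\Omega_1^+)$. By the uniqueness clause of Osborn's theory it now suffices to check that its boundary values in the sense of Osborn agree, off a set of zero logarithmic capacity, with those of $\mathcal{C}_{f^+} h = h \circ f^+|_{\Gamma_1}$.

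The remaining point is the geometric input, which I expect to be the main obstacle: that $f^+$ transports the Osborn boundary-value construction from $\Omega_1^+$ to $\Omega_2^+$. Being a conformal bijection, $f^+ : \Omega_1^+ \to \Omega_2^+$ is an isometry for the hyperbolic metrics, so it carries the hyperbolic geodesic arc $\gamma_{p,q}$ from $p \in \Omega_1^+$ to $q \in \Gamma_1$ onto the hyperbolic geodesic arc $\gamma_{f^+(p),\, f^+(q)}$ in $\Omega_2^+$; here one uses the homeomorphic extension of $f^+$ to the closures to see that the image arc lands at $f^+(q)$. Therefore, for every $q \in \Gamma_1$,
\[
 \lim_{\substack{z \to q \\ z \in \gamma_{p,q}}} ( \mathbf{E}_+ h ) ( f^+(z) )
 = \lim_{\substack{w \to f^+(q) \\ w \in \gamma_{f^+(p),\, f^+(q)}}} ( \mathbf{E}_+ h ) (w) ,
\]
and the right-hand limit exists and equals $h(f^+(q))$ for every $q \in \Gamma_1$ with $f^+(q)$ outside the zero-capacity set $K \subseteq \Gamma_2$ on which the Osborn boundary values of $\mathbf{E}_+ h$ need not represent $h$. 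Since $(f^+)^{-1}(K)$ is again a set of zero logarithmic capacity in $\Gamma_1$ (it is the image of a zero-capacity subset of $\mathbb{S}^1$ under a conformal map onto $\Omega_1^+$), the Osborn boundary values of $( \mathbf{E}_+ h ) \circ f^+$ coincide with $h \circ f^+$ off a null set, which is \eqref{eq:extension_commutes}. Running the same argument verbatim on the ``$-$'' side, and noting that the inverse of an isometry is automatically an isometry, finishes the proof; the only delicate step is this interchange of Osborn boundary values with composition, i.e.\ tracking the exceptional null sets together with the fact that $f^+$ sends Osborn geodesics to Osborn geodesics.
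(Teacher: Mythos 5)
Your argument is correct, but note that the paper itself contains no proof of this statement: Theorem \ref{th:Osborn_oneside_conf_inv} is quoted directly from \cite[Theorem 2.21]{SchippersStaubach_jump}, so there is no in-paper argument to compare against. Your route is the natural one and fits the framework of Section \ref{se:reflection}: conjugate by the extension operators $\mathbf{E}_\pm$, so that the definition of the seminorms on $\mathcal{H}_\pm(\Gamma_i)$ together with the conformal invariance \eqref{eq:composition_invariant} of the Dirichlet seminorm reduces everything to the commutation identity $\mathbf{E}_\pm(\mathcal{C}_{f^\pm}h)=(\mathbf{E}_\pm h)\circ f^\pm$, which you verify by transporting hyperbolic geodesics and the exceptional sets under $f^\pm$. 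The two ingredients you correctly flag as the delicate ones are exactly what the quoted background supplies: uniqueness of the Dirichlet-bounded harmonic extension with prescribed Osborn boundary values (so agreement off a set of zero capacity forces equality of the extensions), and the fact that the boundary correspondence induced by $f^\pm$ takes sets of type $K$ to sets of type $K$, which you obtain by viewing $(f^+)^{-1}\circ g$ as a single conformal map of $\disk^+$ onto $\Omega_1^+$ --- the same device the paper uses to make $\mathcal{C}_f$ well defined on $\mathcal{H}(\Gamma)$. Two minor points to tighten: the exceptional set on $\Gamma_2$ should include both the points where the geodesic limits of $\mathbf{E}_+h$ fail to exist and those where they disagree with $h$ (both are of type $K$, and finite unions of such sets are again of type $K$, so nothing changes); and on the ``$-$'' side the map need not fix $\infty$, so one should note that harmonicity and finiteness of energy at $\infty$ for $(\mathbf{E}_-h)\circ f^-$ follow from the paper's convention for harmonicity at $\infty$, as you indicate.
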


Throughout the paper, we will not specify which semi-norm is placed on $\mathcal{H}(\Gamma)$.  From this point of view, Theorem \ref{th:Osborn_oneside_conf_inv} says only that the composition operator is bounded with respect to the seminorm.

\begin{remark}
In the case that $\Gamma=\mathbb{S}^1$ we have that
\[  \mathcal{H}_+(\mathbb{S}^1) = \mathcal{H}_-(\mathbb{S}^1)=  H^{1/2}(\mathbb{S}^1)= \mathcal{H}(\mathbb{S}^1) \]
where $H^{1/2}(\mathbb{S}^1)$ denotes the set of $L^2$ functions on $\mathbb{S}^1$ whose Fourier series
$h(e^{i\theta}) = \sum_{n=-\infty}^\infty h_n e^{i n \theta}$ satisfy
\[  \sum_{n=-\infty}^\infty |n||h_n|^2 <\infty.  \]
It is also well-known that functions $h \in \mathcal{H}(\mathbb{S}^1)$ have Fourier series which converge to $h$
almost everywhere, in fact except on a set of outer logarithmic capacity zero \cite{El-Fallah_etal_primer}.  We can assume the set is a Borel set of logarithmic capacity zero.
\end{remark}

\end{subsection}

\begin{subsection}{Decompositions of Dirichlet spaces on quasidisks and the projection operators}
 For a fixed $p\in \Omega^+$ we define the holomorphic Dirichlet space on $\Omega^+$ by
\[  \mathcal{D}_p(\Omega^+) = \left\{ h:\Omega^+ \rightarrow \mathbb{C} \,: \,  h \text{ holomorphic}, \ h(p)=0, \text{ and }  \iint_{\Omega^+} |h'|^2 \,dA < \infty \right\}  \]
 and similarly on $\Omega^-$ by
\[   \mathcal{D}_\infty(\Omega^-) = \left\{ h:\Omega^- \rightarrow \mathbb{C} \,: \,  h \text{ holomorphic}, \ h(\infty)=0, \text{ and }  \iint_{\Omega^-} |h'|^2 \,dA < \infty \right\}.  \]
Since the domains $\Omega^{\pm}$ are simply-connected it is immediate that
\[  \mathcal{D}_{\mathrm{harm}}(\Omega^+) = \mathcal{D}_p(\Omega^+) \oplus  \overline{\mathcal{D}_p(\Omega^+)} \oplus \mathbb{C} \]
and
\[  \mathcal{D}_{\mathrm{harm}}(\Omega^-) = \mathcal{D}_\infty(\Omega^-) \oplus
 \overline{\mathcal{D}_\infty(\Omega^-)} \oplus \mathbb{C} \]
where $\overline{\mathcal{D}_p(\Omega^+)}$ denotes the anti-holomorphic functions on $\Omega^+$ vanishing at $p$ and
with finite Dirichlet energy (equivalently, conjugates of elements of $\mathcal{D}_p(\Omega^+)$
and similarly for $\Omega^-$).  The $\mathbb{C}$ in the decomposition corresponds to the
constants $h(p)$ and $h(\infty)$ respectively.

Moreover, we have a reflection
 map
 \begin{align*}
  \mathbf{R}: \mathcal{D}_\infty(\disk^-) & \longrightarrow \overline{\mathcal{D}_0(\disk^+)}\\
  h(z) & \longmapsto h(1/\bar{z}).
 \end{align*}
 and similarly $\mathbf{R}:\mathcal{D}_0(\disk^+) \rightarrow \overline{\mathcal{D}_\infty(\disk^-)}$.
 It is easily seen that $\mathbf{R}$ is an isomorphism.   Furthermore $\mathbf{R}$
 preserves the boundary values of $h$.  Thus the decomposition of $\mathcal{D}_{\mathrm{harm}}(\Omega^+)$ also
 agrees with the decomposition $\mathcal{H}(\mathbb{S}^1) = \mathcal{D}_0(\disk^+) \oplus \mathcal{D}_\infty(\disk^-) \oplus \mathbb{C}$.

Now we define the Cauchy projections. In the following definition, and the rest of the paper,
we denote  by $\gamma_r$ the curve $|z|=r$ oriented counterclockwise.
\begin{definition}
 Let $\Gamma$ be a quasicircle not containing $\infty$.  Let $\Omega^+$ and $\Omega^-$ be the bounded and
 unbounded components of the complement respectively.  Let $f:\disk^+ \rightarrow \Omega^+$ be conformal.

 We define, for $z \in \Omega^-$,
 \begin{align*}
  \mathrm{P}_\infty(\Omega^-):\mathcal{H}(\Gamma) & \longrightarrow \mathcal{D}_\infty(\Omega^-) \\
  h & \longmapsto - \lim_{r \nearrow 1} \frac{1}{2 \pi i} \int_{f(\gamma_r)} \frac{\mathbf{E}_+ h (\zeta)}{\zeta - z}\, d\zeta
 \end{align*}
 and for $z \in \Omega^+$,
 \begin{align*}
  \mathrm{P}_p(\Omega^+):\mathcal{H}(\Gamma) & \longrightarrow \mathcal{D}_p(\Omega^+) \\
  h & \longmapsto \lim_{r \nearrow 1} \frac{1}{2 \pi i} \int_{f(\gamma_r)} \frac{(z-p)\,   \mathbf{E}_+h(\zeta)}{(\zeta-p)(\zeta - z)}\, d\zeta.
 \end{align*}
\end{definition}

These projections are well-defined and bounded.
\begin{theorem} \label{th:all_about_projection}
 Let $\Gamma$ be a quasicircle not containing $\infty$.  Let $\Omega^+$ and $\Omega^-$ be the bounded and
 unbounded components of the complement respectively.   Then
 \begin{enumerate}
  \item $\mathrm{P}_p(\Omega^+)$ and $\mathrm{P}_\infty(\Omega^-)$ are independent of the
  choice of $f$,
  \item $\mathrm{P}_p(\Omega^+)$ is a bounded map into $\mathcal{D}_p(\Omega^+)$, and
  \item $\mathrm{P}_\infty(\Omega^-)$ is bounded map into $\mathcal{D}_\infty(\Omega^-)$.
 \end{enumerate}
 Furthermore, for any  conformal map $g:\disk^- \rightarrow \Omega^-$ we have
 the alternate integral formulas
 \[  \mathrm{P}_p(\Omega^+)h (z) = \lim_{r \searrow 1} \frac{1}{2 \pi i} \int_{g(\gamma_r)} \frac{(z-p)\, \mathbf{E}_- h(\zeta)}
     {(\zeta-p)(\zeta - z)}\, d\zeta, \quad \text{for }  z \in \Omega^+, \]
 and
 \[  \mathrm{P}_\infty(\Omega^-) h (z) = - \lim_{r \searrow 1} \frac{1}{2 \pi i} \int_{g(\gamma_r)} \frac{\mathbf{E}_- h(\zeta)}
     {\zeta - z}\, d\zeta, \quad \text{for }  z \in \Omega^-. \]
\end{theorem}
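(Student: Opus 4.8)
The plan is to rewrite each limiting contour integral defining $\mathrm{P}_p(\Omega^+)$ and $\mathrm{P}_\infty(\Omega^-)$ as an \emph{area} integral against a Cauchy kernel, and then to extract every assertion from standard facts about the Cauchy and Beurling transforms together with the two-sided agreement of Osborn boundary values established in \cite{SchippersStaubach_jump}. Concretely, fix a conformal $f\colon\disk^+\to\Omega^+$. For $r<1$ the curve $f(\gamma_r)$ is real-analytic and bounds the relatively compact subdomain $f(\{|w|<r\})\subset\Omega^+$, so the classical Cauchy--Pompeiu identity applies to $\mathbf{E}_+ h\in\mathcal{D}_{\mathrm{harm}}(\Omega^+)$ there, and $\partial_{\bar\zeta}\mathbf{E}_+ h\in L^2(\Omega^+)\subseteq L^1(\Omega^+)$ because $\Omega^+$ is bounded. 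For $z\in\Omega^-$ the point $z$ lies outside $f(\{|w|<r\})$, so the contour integral equals $\tfrac1\pi\iint_{f(\{|w|<r\})}\tfrac{\partial_{\bar\zeta}\mathbf{E}_+ h(\zeta)}{\zeta-z}\,dA(\zeta)$, and letting $r\nearrow 1$ with dominated convergence gives
\[
  \mathrm{P}_\infty(\Omega^-)h(z)=-\frac{1}{\pi}\iint_{\Omega^+}\frac{\partial_{\bar\zeta}\mathbf{E}_+ h(\zeta)}{\zeta-z}\,dA(\zeta),\qquad z\in\Omega^-,
\]
while the analogous computation for the $\mathrm{P}_p$ kernel, with $z,p$ inside $f(\{|w|<r\})$ and $\tfrac{z-p}{(\zeta-p)(\zeta-z)}=\tfrac1{\zeta-z}-\tfrac1{\zeta-p}$, gives
\[
  \mathrm{P}_p(\Omega^+)h(z)=\mathbf{E}_+ h(z)-\mathbf{E}_+ h(p)+\frac{1}{\pi}\iint_{\Omega^+}\frac{(z-p)\,\partial_{\bar\zeta}\mathbf{E}_+ h(\zeta)}{(\zeta-p)(\zeta-z)}\,dA(\zeta),\qquad z\in\Omega^+,
\]
the convergence of the area integral through the interior singularities $\zeta=z,p$ being the standard Vekua/Cauchy-transform theory for $L^2$ densities on a bounded domain. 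As these right-hand sides mention only $\mathbf{E}_+ h$ and $\Omega^{\pm}$, never $f$, this simultaneously proves existence of the limits and assertion (1).

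For holomorphy, the target spaces, and boundedness, I would extend $g_0:=\partial_{\bar\zeta}\mathbf{E}_+ h$ by zero to a compactly supported $L^2$ function on $\mathbb{C}$ and recognise $\mathrm{P}_\infty(\Omega^-)h$ and the last term of the $\mathrm{P}_p$ formula via the Cauchy transform $\mathcal{T}g_0$, which satisfies $\partial_{\bar z}\mathcal{T}g_0=g_0$ and $\partial_z\mathcal{T}g_0=\mathcal{S}g_0$ with $\mathcal{S}$ the Beurling transform, an isometry of $L^2(\mathbb{C})$. On $\Omega^-$, where $g_0\equiv 0$, the function $\mathcal{T}g_0$ is holomorphic and vanishes at $\infty$; in the $\mathrm{P}_p$ formula the term $\mathbf{E}_+ h(z)$ is cancelled since $\partial_{\bar z}\mathcal{T}g_0=g_0$, leaving a holomorphic function which vanishes at $p$ by inspection of the $(z-p)$ factor. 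The isometry of $\mathcal{S}$ then bounds $\|\mathrm{P}_\infty(\Omega^-)h\|_{\mathcal{D}(\Omega^-)}$ and $\|(\mathrm{P}_p(\Omega^+)h)'\|_{L^2(\Omega^+)}$ by $\|\partial_{\bar\zeta}\mathbf{E}_+ h\|_{L^2(\Omega^+)}\le\|\mathbf{E}_+ h\|_{\mathcal{D}_{\mathrm{harm}}(\Omega^+)}$, and boundedness of $\mathbf{E}_+\colon\mathcal{H}(\Gamma)\to\mathcal{D}_{\mathrm{harm}}(\Omega^+)$ gives (2) and (3). These are the estimates of \cite{SchippersStaubach_jump} reorganised around the area formulas.

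For the alternate formulas I would run the identical reduction from the $\Omega^-$ side. Given a conformal $g\colon\disk^-\to\Omega^-$, for $r$ slightly larger than $1$ and $z\in\Omega^-$, the curve $g(\gamma_r)$ bounds the exterior domain $g(\{|w|>r\})$, which contains $\infty$ and, for $r$ near $1$, the point $z$; the exterior form of Cauchy--Pompeiu carries an additional term $\mathbf{E}_- h(\infty)$ and has $g(\gamma_r)$ in reversed orientation, so letting $r\searrow 1$ gives
\[
  -\lim_{r\searrow 1}\frac{1}{2\pi i}\int_{g(\gamma_r)}\frac{\mathbf{E}_- h(\zeta)}{\zeta-z}\,d\zeta=\mathbf{E}_- h(z)-\mathbf{E}_- h(\infty)+\frac{1}{\pi}\iint_{\Omega^-}\frac{\partial_{\bar\zeta}\mathbf{E}_- h(\zeta)}{\zeta-z}\,dA(\zeta),
\]
and similarly for the $\mathrm{P}_p$ kernel, where the two copies of $\mathbf{E}_- h(\infty)$ cancel and $z$ instead lies outside $g(\{|w|>r\})$. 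Comparing with the $\Omega^+$-side formulas, the desired equality reduces to showing $\mathbf{E}_- h(z)-\mathbf{E}_- h(\infty)=\mathcal{T}[\partial_{\bar\zeta}U](z)$ on $\Omega^-$, where $U$ is the function equal to $\mathbf{E}_+ h$ on $\Omega^+$ and to $\mathbf{E}_- h$ on $\Omega^-$. By \cite[Theorem 2.14]{SchippersStaubach_jump} these two extensions agree, in the sense of Osborn, off a set of logarithmic capacity zero; such a set is removable, so $U$ is a single function in $W^{1,2}_{\mathrm{loc}}(\mathbb{C})$ whose distributional $\bar\partial$ is the $L^2$ function $\partial_{\bar\zeta}U$, with no singular part on $\Gamma$. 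Then $U-\mathcal{T}[\partial_{\bar\zeta}U]$ has vanishing distributional $\bar\partial$ on $\sphere$, hence is holomorphic on $\mathbb{C}$ and bounded at $\infty$, hence constant with value $\mathbf{E}_- h(\infty)$ at $\infty$ — which is exactly the identity needed; the $\mathrm{P}_p$ case is identical.

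The main obstacle is the last step: for a quasicircle, which may be non-rectifiable — so that one can neither integrate over $\Gamma$ nor invoke Hardy-type pointwise boundary values — the agreement of Osborn boundary values really does force the absence of a singular $\bar\partial$ along $\Gamma$, equivalently that the inside and outside limiting Cauchy integrals coincide. This is precisely the key theorem of \cite{SchippersStaubach_jump}, resting on the bounded harmonic reflection; granting it, everything else is the bookkeeping sketched above.
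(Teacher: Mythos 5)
Most of your bookkeeping is sound: the Cauchy--Pompeiu reduction of the limiting contour integrals to area integrals against $\partial_{\bar\zeta}\mathbf{E}_+h$ does prove existence of the limits, independence of $f$, holomorphy, and (with the minor correction that the bound on $(\mathrm{P}_p(\Omega^+)h)'$ also needs the $\partial_z\mathbf{E}_+h$ term, not only the Beurling transform of the $\bar\partial$ part) the boundedness claims (1)--(3). The genuine gap is in the decisive step, the alternate formulas from the $\Omega^-$ side. Your assertion that, because the Osborn boundary values of $\mathbf{E}_+h$ and $\mathbf{E}_-h$ agree off a set of logarithmic capacity zero, ``such a set is removable, so $U$ is a single function in $W^{1,2}_{\mathrm{loc}}(\mathbb{C})$'' with no singular part of $\bar\partial U$ on $\Gamma$, is a non sequitur: the set you must glue across is the whole quasicircle $\Gamma$, which has positive capacity (only the exceptional subset of $\Gamma$ has capacity zero), is in general non-rectifiable, and admits no standard trace theory that converts agreement of limits along hyperbolic geodesics quasi-everywhere into agreement of Sobolev traces. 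Passing from the matching of Osborn boundary values (Theorem 2.14 of \cite{SchippersStaubach_jump}, i.e.\ existence of the bounded reflection) to the absence of a distributional $\bar\partial$ concentrated on $\Gamma$ --- equivalently, to the coincidence of the limiting Cauchy integrals from the two sides --- is exactly the hard content of the statement, and your proposal does not supply an argument for it.

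Compare with the paper's own proof, which is a two-line reduction: observe that the $\mathrm{P}_p$ kernel is the Cauchy kernel composed with the bounded projection $h\mapsto h-h(p)$, and cite Theorems 3.2 and 3.5 of \cite{SchippersStaubach_jump}, which already contain the boundedness, the $f$-independence, and the two-sided integral formulas. You end by conceding that the missing step ``is precisely the key theorem of \cite{SchippersStaubach_jump}.'' If you are permitted to invoke that jump theorem, then the alternate formulas follow immediately and your Cauchy-transform/Beurling-transform detour is superfluous (this is essentially what the paper does); if you are not, your argument does not close the gap. Either way, what your proposal genuinely re-derives is parts (1)--(3) by an area-integral argument in the spirit of the companion paper, while the crux --- the equality of the inside and outside limits across a possibly non-rectifiable quasicircle --- is asserted rather than proved.
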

\begin{proof}Observe that
 the projection from the full Dirichlet space $\mathcal{D}(\Omega^+)$ to $\mathcal{D}_p(\Omega^+)$
 given by $h \mapsto h - h(p)$ is bounded with respect to the norm $\|h\|^2_{\mathcal{D}(\Omega^+)} = \|h\|_{\mathcal{D}_p(\Omega^+)}^2 + |h(p)|^2$.  The integral kernel of this projection is
 \[  \frac{1}{2\pi i} \frac{(z-p)}{(\zeta-p)(\zeta-z)}.  \]
 The claim thus reduces to \cite[Theorems 3.2, 3.5]{SchippersStaubach_jump}.
\end{proof}

We remark that on the disk, the integral can be taken over the boundary: that is,
\[  \mathrm{P}_p(\disk^+)h(z) = \frac{1}{2 \pi i} \int_{\mathbb{S}^1} \frac{(z-p)\, \mathbf{E}_+ h(\zeta)}{(\zeta-p)(\zeta - z)}\, d\zeta  \]
and similarly for $\mathrm{P}_\infty(\disk^-)$.  In fact this holds for any Weil-Petersson
class quasidisk \cite{RSS_WPjump}.
\end{subsection}
\end{section}

\begin{section}{The Faber isomorphism and Grunsky operators}\label{se:faber_grunsky}
\begin{subsection}{Faber isomorphism and Grunsky operator}
 Next, we define two operators. We notationally suppress a trace to the boundary in both definitions for simplicity.
 \begin{definition}
  Let $\Gamma$ be a quasicircle not containing $\infty$, with bounded and unbounded
  components $\Omega^+$ and $\Omega^-$ respectively. Let $f: \disk^+ \to \Omega^+$ be a conformal map.  Define $I_f$ by
  \begin{align*}
   \mathrm{I}_f: \mathcal{D}_\infty(\disk^-) & \longrightarrow \mathcal{D}_\infty(\Omega^-) \\
   h & \longmapsto \mathrm{P}_\infty(\Omega^-) \mathcal{C}_{f^{-1}} h,
  \end{align*}

  and define the Grunsky operator by
  \begin{align*}
   \Gr_f: \mathcal{D}_\infty(\disk^-) & \longrightarrow \mathcal{D}_0(\disk^+) \\
   h & \longmapsto \mathrm{P}_0(\disk^+) \mathcal{C}_f \mathrm{I}_f h.
  \end{align*}
 \end{definition}
As it is clear from the notations and the definitions, the maps $\mathrm{I}_f$ and $\mathrm{Gr}_f$ depend on $f$.
 \begin{remark}  Our definition of the Grunsky operator differs slightly from that in
  \cite{SchippersStaubach_Grunsky_quasicircle}. In that paper, the projection did not annihilate the constant as it does here.
 \end{remark}

  In \cite[Theorems 3.7, 3.14]{SchippersStaubach_Grunsky_quasicircle} we showed the following facts.
  \begin{theorem} \label{th:If_isomorphism_Grunsky_bounded}  $\Gr_f$ is a bounded operator and
   $\mathrm{I}_f$ is a bounded isomorphism.
  \end{theorem}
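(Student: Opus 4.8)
The plan is to produce an explicit bounded two-sided inverse for $\mathrm{I}_f$; boundedness of $\mathrm{I}_f$ and of $\Gr_f$ will then be read off from the construction. Since $f$ is quasiconformally extendible, Theorem~\ref{th:Osborn_oneside_conf_inv} provides bounded composition operators $\mathcal{C}_{f^{-1}}\colon\mathcal{H}(\mathbb{S}^1)\to\mathcal{H}(\Gamma)$ and $\mathcal{C}_f\colon\mathcal{H}(\Gamma)\to\mathcal{H}(\mathbb{S}^1)$ on boundary values; combined with the boundedness of the Cauchy projections (Theorem~\ref{th:all_about_projection}) and of the trace $\mathcal{D}_\infty(\disk^-)\to\mathcal{H}(\mathbb{S}^1)$, this shows at once that $\mathrm{I}_f=\mathrm{P}_\infty(\Omega^-)\mathcal{C}_{f^{-1}}$ is bounded, and likewise that the natural candidate inverse
\[
\mathrm{J}_f:=\mathrm{P}_\infty(\disk^-)\,\mathcal{C}_f\colon\mathcal{D}_\infty(\Omega^-)\longrightarrow\mathcal{D}_\infty(\disk^-)
\]
is bounded. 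The task then reduces to verifying $\mathrm{J}_f\mathrm{I}_f=\mathrm{id}$ and $\mathrm{I}_f\mathrm{J}_f=\mathrm{id}$; granting this, $\Gr_f=\mathrm{P}_0(\disk^+)\mathcal{C}_f\mathrm{I}_f$ is automatically bounded as a composition of bounded maps.

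The engine of the argument is the Plemelj--Sokhotski jump decomposition for quasicircles from \cite{SchippersStaubach_jump}, rephrased through the projections: for every $\phi\in\mathcal{H}(\Gamma)$ one has, equating boundary values on $\Gamma$,
\[
\phi=\mathrm{P}_p(\Omega^+)\phi+\mathrm{P}_\infty(\Omega^-)\phi+c(\phi)
\]
for some constant $c(\phi)$, with the analogous identity $\phi=\mathrm{P}_0(\disk^+)\phi+\mathrm{P}_\infty(\disk^-)\phi+c(\phi)$ on $\mathbb{S}^1$. This is immediate from the jump relation $F^+-F^-=\phi$ of \cite{SchippersStaubach_jump} (where $F^\pm$ are the one-sided Cauchy integrals of $\phi$ on $\Omega^\pm$, normalized by $F^-(\infty)=0$) together with the identifications $\mathrm{P}_\infty(\Omega^-)\phi=-F^-$ and $\mathrm{P}_p(\Omega^+)\phi=F^+-F^+(p)$, which one reads off from the integral kernels in the definitions. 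I also use the complementary reproducing/annihilating properties: $\mathrm{P}_\infty(\Omega^-)$ restricts to the identity on the boundary values of $\mathcal{D}_\infty(\Omega^-)$ and annihilates both the boundary values of $\mathcal{D}(\Omega^+)$ and the constants, and symmetrically for $\mathrm{P}_\infty(\disk^-)$ on $\mathbb{S}^1$. These follow from Cauchy's theorem and a single residue computation, carried out using the ``one-sided'' integral formulas of Theorem~\ref{th:all_about_projection} so that in each case the relevant $\mathbf{E}_\pm$ is just the given one-sided holomorphic function.

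For the first identity, fix $h\in\mathcal{D}_\infty(\disk^-)$ and set $\phi=\mathcal{C}_{f^{-1}}h\in\mathcal{H}(\Gamma)$, so that $\mathrm{I}_f h=\mathrm{P}_\infty(\Omega^-)\phi$. Solving the jump decomposition of $\phi$ for its $\Omega^-$-term and precomposing the resulting boundary-value identity with $f$, while using $\phi\circ f=h$ on $\mathbb{S}^1$, gives $\mathcal{C}_f\mathrm{I}_f h=h-w-c(\phi)$ in $\mathcal{H}(\mathbb{S}^1)$, where $w$ is the boundary value of $(\mathrm{P}_p(\Omega^+)\phi)\circ f$; the latter lies in $\mathcal{D}(\disk^+)$ because $\mathrm{P}_p(\Omega^+)\phi$ is holomorphic on $\Omega^+$ and $f\colon\disk^+\to\Omega^+$ is conformal, so conformal invariance of the Dirichlet space applies. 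Applying $\mathrm{P}_\infty(\disk^-)$ and using the reproducing/annihilating properties ($\mathrm{P}_\infty(\disk^-)$ fixes $h$ and kills both $w$ and the constant) yields $\mathrm{J}_f\mathrm{I}_f h=h$. The identity $\mathrm{I}_f\mathrm{J}_f=\mathrm{id}$ follows from the mirror computation: for $u\in\mathcal{D}_\infty(\Omega^-)$, apply the jump decomposition on $\mathbb{S}^1$ to $\psi=\mathcal{C}_f u$, solve for its $\disk^-$-term, precompose with $f^{-1}$ on $\Gamma$ (using $\psi\circ f^{-1}=u$), observe that $(\mathrm{P}_0(\disk^+)\psi)\circ f^{-1}\in\mathcal{D}(\Omega^+)$ by conformal invariance, and apply $\mathrm{P}_\infty(\Omega^-)$, which reproduces $u$ and annihilates the remaining terms. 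Hence $\mathrm{I}_f$ is a bounded isomorphism with inverse $\mathrm{J}_f$, and $\Gr_f$ is bounded.

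I expect the genuine difficulty to lie entirely in the input imported from \cite{SchippersStaubach_jump}: namely that, on a quasicircle that may be non-rectifiable and of positive area, the limiting Cauchy integrals defining $\mathrm{P}_p(\Omega^+)$ and $\mathrm{P}_\infty(\Omega^-)$ converge, are independent of the approximating curves and of the side of approach, land in the asserted Dirichlet spaces, and satisfy the jump relation, together with the existence and boundedness of the harmonic reflection $\mathbf{E}_\pm$. With that machinery available, the argument above is essentially formal; the only place real care is needed is the bookkeeping of the additive constants and of which space -- $\mathcal{D}_\infty(\disk^\pm)$, $\mathcal{D}_0(\disk^+)$, $\mathcal{D}(\Omega^\pm)$, or $\mathcal{H}$ on the relevant curve -- each function occupies at each step.
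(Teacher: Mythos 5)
Your proposal is correct, but note that the paper itself contains no proof of this statement: it is quoted from \cite[Theorems 3.7, 3.14]{SchippersStaubach_Grunsky_quasicircle}, so the only comparison available is with that source, and in substance your argument reconstructs it. Your candidate inverse $\mathrm{J}_f=\mathrm{P}_\infty(\disk^-)\,\mathcal{C}_f$ and the identity $\mathrm{J}_f\mathrm{I}_f=\mathrm{id}$ are precisely the relation $\mathrm{P}_\infty(\disk^-)\hat{\mathcal{C}}_f\mathrm{I}_f=\mathrm{Id}$ that the present paper later imports as \cite[Equation 3.7]{SchippersStaubach_Grunsky_quasicircle} in Theorem \ref{th:Grunsky_graph_simply}, and the reproducing/annihilating properties of the Cauchy projections you invoke (reproduce boundary values of functions holomorphic on the same side and vanishing at $\infty$, annihilate boundary values of functions holomorphic on the opposite side and constants) are exactly \cite[Theorem 3.10]{SchippersStaubach_jump}, the same facts the paper uses in the proof of Theorem \ref{th:decomposition}; your residue-type justification via the one-sided integral formulas of Theorem \ref{th:all_about_projection} is the right way to see them. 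The remaining ingredients are also correctly sourced: boundedness of $\mathrm{I}_f$, $\mathrm{J}_f$ and hence $\Gr_f$ follows from Theorems \ref{th:Osborn_oneside_conf_inv} and \ref{th:all_about_projection} together with the comparability of the $\mathcal{H}_\pm(\Gamma)$ seminorms, and the single point needing explicit mention is that applying $\mathrm{P}_\infty(\Omega^-)$ (whose defining formula uses $\mathbf{E}_+$) to boundary values of $\mathcal{D}_\infty(\Omega^-)$, and composing traces with $f^{\pm 1}$, requires $\mathcal{H}_+(\Gamma)=\mathcal{H}_-(\Gamma)$ and the bounded reflection, i.e.\ the quasicircle hypothesis --- which you correctly identify as the genuinely hard imported input from \cite{SchippersStaubach_jump}. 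So: correct, essentially the route of the cited source, with the heavy lifting residing where you say it does.
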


 We register useful alternate expressions  for $\mathrm{I}_f$ and $\Gr_f$.
 For $h \in \mathcal{D}_\infty(\disk^-)$,
  \begin{equation}  \label{eq:If_alternate_using_r}
   \mathrm{I}_f h(z) =  - \lim_{r \nearrow 1} \frac{1}{2 \pi i} \int_{f(\gamma_r)}
    \frac{(\mathbf{R} h) \circ f^{-1}(\zeta)}{\zeta - z} \, d\zeta.
  \end{equation}
  This follows immediately from the observation that $(\mathbf{R} h) \circ f^{-1}$
  is the harmonic extension of $\mathcal{C}_{f^{-1}} h$ to $\Omega^+$.

  \begin{remark}  It can be shown that if $h$ is a polynomial in $1/z$, then $\mathbf{R}h$ can be replaced by $h$ in the
  above expression \cite[Theorem 3.4]{SchippersStaubach_Grunsky_quasicircle}.
   In particular, $\mathrm{I}_f (z^{-n})$ is the $n$th Faber
  polynomial of the domain $\Omega^-$.  Although we will not need this result, it is closely related to the fact that $\mathrm{I}_f$ is an isomorphism.
  \end{remark}

 \begin{theorem}   \label{th:Grunsky_integral} The Grunsky operator
  has the integral expression
  \begin{equation*}
   \left[\Gr_f h \right](z)=  \lim_{r \nearrow 1} \frac{1}{2\pi i} \int_{\gamma_r}
    \left[ \frac{z}{\zeta(\zeta - z)} - \frac{f'(\zeta)}{f(\zeta)-f(z)} + \frac{f'(\zeta)}{f(\zeta)-f(0)} \right]   (\mathbf{R} h)(\zeta) \, d\zeta.
  \end{equation*}
 \end{theorem}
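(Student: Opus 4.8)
The plan is to unwind the definition $\Gr_f h = \mathrm{P}_0(\disk^+)\,\mathcal{C}_f\,\mathrm{I}_f h$ into an iterated integral, interchange the order of integration, and collapse the inner integral by a residue computation. Since $\mathrm{I}_f h \in \mathcal{D}_\infty(\Omega^-)$ has boundary values in $\mathcal{H}(\Gamma)$ and $f$ induces a bounded composition operator $\mathcal{C}_f \colon \mathcal{H}(\Gamma) \to \mathcal{H}(\mathbb{S}^1)$ (Theorem \ref{th:Osborn_oneside_conf_inv}), the function $\mathcal{C}_f \mathrm{I}_f h$ lies in $\mathcal{H}(\mathbb{S}^1)$; by the remark following Theorem \ref{th:all_about_projection} the projection $\mathrm{P}_0(\disk^+)$ on the disk may be written as a boundary integral over $\mathbb{S}^1$. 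Hence, for $z \in \disk^+$,
\[
  \Gr_f h(z) \;=\; \frac{1}{2\pi i}\int_{\mathbb{S}^1} \Bigl( \frac{1}{\zeta - z} - \frac{1}{\zeta} \Bigr)\, \mathrm{I}_f h\bigl(f(\zeta)\bigr)\, d\zeta ,
\]
where $\mathrm{I}_f h(f(\zeta))$ denotes the boundary value of $\mathrm{I}_f h$ at the point $f(\zeta) \in \Gamma$, which exists for almost every $\zeta \in \mathbb{S}^1$.

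Next I would feed in the alternate expression \eqref{eq:If_alternate_using_r}. After the substitution $\xi = f(\eta)$ it reads $\mathrm{I}_f h(w) = -\lim_{r \nearrow 1}\frac{1}{2\pi i}\int_{\gamma_r}\frac{(\mathbf{R} h)(\eta)\, f'(\eta)}{f(\eta) - w}\, d\eta$ for $w \in \Omega^-$, and its limit as $w \to f(\zeta)$ furnishes $\mathrm{I}_f h(f(\zeta))$ for a.e.\ $\zeta$. Inserting this into the previous display: for each fixed $r < 1$ the integrand is jointly continuous on the compact set $\mathbb{S}^1 \times \gamma_r$ (because $f$ is injective on $\overline{\disk^+}$, so $f(\eta) \neq f(\zeta)$, and $\gamma_r$ keeps a positive distance from $\mathbb{S}^1$), so Fubini's theorem applies; passing $\lim_{r \nearrow 1}$ back outside (the point addressed below) then gives
\[
  \Gr_f h(z) \;=\; -\lim_{r \nearrow 1}\frac{1}{2\pi i}\int_{\gamma_r} (\mathbf{R} h)(\eta)\, f'(\eta)\, J(\eta,z)\, d\eta,
  \qquad
  J(\eta,z) := \frac{1}{2\pi i}\int_{\mathbb{S}^1}\Bigl( \frac{1}{\zeta-z} - \frac{1}{\zeta}\Bigr)\frac{d\zeta}{f(\eta)-f(\zeta)} .
\]

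It remains to evaluate $J(\eta,z)$ for fixed $\eta \in \gamma_r$ and $z \in \disk^+$. As a function of $\zeta$ the integrand is meromorphic on $\disk^+$ and continuous up to $\mathbb{S}^1$, with simple poles only at $\zeta = z$, $\zeta = 0$ and $\zeta = \eta$; the pole at $\zeta = \eta$ comes from the simple zero of $f(\eta) - f(\zeta)$ there, and contributes the factor $-1/f'(\eta)$. By the residue theorem,
\[
  J(\eta,z) \;=\; \frac{1}{f(\eta)-f(z)} \;-\; \frac{1}{f(\eta)-f(0)} \;-\; \frac{z}{f'(\eta)\,\eta(\eta-z)} .
\]
Substituting this back, distributing the factor $f'(\eta)$ over the three terms, and flipping the overall sign yields exactly
\[
  \Gr_f h(z) \;=\; \lim_{r \nearrow 1}\frac{1}{2\pi i}\int_{\gamma_r}\Bigl[\, \frac{z}{\eta(\eta-z)} \;-\; \frac{f'(\eta)}{f(\eta)-f(z)} \;+\; \frac{f'(\eta)}{f(\eta)-f(0)}\,\Bigr](\mathbf{R} h)(\eta)\, d\eta ,
\]
which is the asserted identity (after renaming $\eta$ as $\zeta$).

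The step that requires genuine care is the passage to the boundary together with the interchange of $\lim_{r\nearrow 1}$ and $\int_{\mathbb{S}^1}$ in the second paragraph: one needs that the limiting Cauchy-type integral in \eqref{eq:If_alternate_using_r} has boundary values on $\Gamma$ equal to those of $\mathrm{I}_f h$, with convergence (after pulling back by $f$) in a topology at least as strong as that of $\mathcal{H}(\mathbb{S}^1) \subset L^2(\mathbb{S}^1)$, so that it may be paired with the bounded kernel $\tfrac{1}{\zeta-z}-\tfrac{1}{\zeta}$. This is precisely the kind of boundary behaviour of Cauchy integrals on quasicircles established in \cite{SchippersStaubach_jump}. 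If one prefers to isolate the elementary core, one can first run the whole computation for $h$ a polynomial in $1/z$ — then $\mathrm{I}_f h$ is a Faber polynomial, hence entire, all limits are superfluous, and Fubini and the residues are elementary — and afterwards extend to arbitrary $h \in \mathcal{D}_\infty(\disk^-)$ by density, once the boundedness of both sides is in hand (the left side by Theorem \ref{th:If_isomorphism_Grunsky_bounded}).
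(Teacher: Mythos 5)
Your residue computation and the final bookkeeping are correct, and your route is genuinely different from the paper's: the paper does not recompute anything, it simply quotes \cite[Theorem 3.9]{SchippersStaubach_Grunsky_quasicircle}, which gives the integral expression for the unnormalized operator $\breve{\Gr}_f = P(\disk^+)\mathcal{C}_f\mathrm{I}_f$, and then subtracts $\breve{\Gr}_f h(0)$, using $\tfrac{1}{\zeta-z}-\tfrac{1}{\zeta}=\tfrac{z}{\zeta(\zeta-z)}$. You instead try to re-derive that cited formula from the boundary-integral form of $\mathrm{P}_0(\disk^+)$, Fubini, and residues. That is a legitimate and more self-contained plan, but as written it has a gap exactly where the analytic content of the theorem sits: you substitute, for the boundary trace of $\mathrm{I}_f h$ at $f(\zeta)\in\Gamma$, the expression \eqref{eq:If_alternate_using_r} with $z$ replaced by the boundary point, and then pull $\lim_{r\nearrow 1}$ outside the $\mathbb{S}^1$-integral. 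This requires (i) that the Osborn boundary values of the limiting Cauchy integral coincide q.e.\ with the value of the same $r$-limit evaluated at boundary points (an interchange of the limits $r\nearrow 1$ and $w\to\Gamma$), and (ii) a convergence strong enough (e.g.\ in $L^2(\mathbb{S}^1)$ after pullback, or dominated pointwise) to justify the Fubini/limit interchange. You only gesture at \cite{SchippersStaubach_jump} for this; no statement in the present paper supplies it, and it is precisely what the cited Theorem 3.9 of \cite{SchippersStaubach_Grunsky_quasicircle} was proved to handle.

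Your fallback (polynomials in $1/z$ plus density) is the right way to repair this, but it too is incomplete as stated: the left side is bounded by Theorem \ref{th:If_isomorphism_Grunsky_bounded}, while the continuity in $h$ of the right side, for fixed $z$, of $h\mapsto \lim_{r\nearrow 1}\frac{1}{2\pi i}\int_{\gamma_r}K(\zeta,z)(\mathbf{R}h)(\zeta)\,d\zeta$ is exactly what is \emph{not} ``in hand'' and must be proved. It can be: the kernel $K(\cdot,z)$ is holomorphic on $\disk^+$ (the poles at $\zeta=z$ and $\zeta=0$ cancel) and lies in $L^2(\disk^+)$ because $\iint_{\disk^+}|f'|^2\,dA=\operatorname{area}(\Omega^+)<\infty$ and the denominators $f(\zeta)-f(z)$, $f(\zeta)-f(0)$ stay away from zero near $\mathbb{S}^1$; converting the contour integral to an area integral by Stokes' theorem (using that $\mathbf{R}h$ is antiholomorphic) and applying Cauchy--Schwarz then gives a bound by $\|h\|_{\mathcal{D}_\infty(\disk^-)}$, after which pointwise convergence plus norm convergence of $\Gr_f h_n$ closes the density argument. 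Also a small slip: $\mathrm{I}_{f}(z^{-m})$ is not entire; it is a polynomial in $1/(z-f(0))$, hence holomorphic on a neighbourhood of $\overline{\Omega^-}$, which is all your polynomial case needs. With these two points supplied, your argument becomes a complete, independent proof of the formula that the paper obtains by citation.
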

 \begin{proof}
  Let $P(\disk^+)$ denote the projection onto $\mathcal{D}(\disk^+)$, that is, the full Dirichlet space
  without the condition that the functions be constant at $0$.  In \cite[Theorem 3.9]{SchippersStaubach_Grunsky_quasicircle},
  we showed that the operator $\breve{\Gr}_f = P(\disk^+) \mathcal{C}_f \mathrm{I}_f$ has the integral expression
  \[   \breve{\Gr}_f h (z)=  \lim_{r \nearrow 1} \frac{1}{2\pi i} \int_{\gamma_r}
    \left[ \frac{1}{\zeta - z} - \frac{f'(\zeta)}{f(\zeta)-f(z)} \right]   (\mathbf{R} h)(\zeta) \, d\zeta.   \]
  The claim now follows from the facts that
  $\Gr_f h (z)= \breve{\Gr}_f h (z) -\breve{\Gr}_f h (0)$
  and
  \[ \breve{\Gr}_f h (0) =   \frac{1}{2 \pi i} \int_{\gamma_r} \left[ \frac{1}{\zeta}  -
     \frac{f'(\zeta)}{f(\zeta)-f(0)} \right](\mathbf{R} h)(\zeta) d\zeta.  \]
 \end{proof}
\end{subsection}

\begin{subsection}{Isomorphism onto Dirichlet spaces of multiply-connected domains}
 In this section we generalize the isomorphism $\mathbf{I}_f$ to multiply-connected domains.
 Let $\riem$ be a domain in $\sphere$, bordered by non-intersecting quasicircles $\Gamma_i$, $i=1,\ldots,n$,
 such that $\infty \in \riem$.  Let $\Omega_i^+$ and $\Omega_i^-$ denote the bounded and unbounded components
 of $\sphere \backslash \Gamma_i$ respectively.
 Let
 \[  \mathcal{D}_\infty(\Sigma) = \left\{ h:\Sigma \rightarrow \mathbb{C}  \,:\,
 h  \text{ holomorphic, } h(\infty)=0, \text{ and }   \iint_\Sigma | h'|^2 \,dA <\infty  \right\}  \]
  denote the Dirichlet space of $\Sigma$.
 Let
 \[  \mathcal{D}_\infty(\disk^-)^n = \mathcal{D}_\infty(\disk^-) \oplus \cdots \oplus \mathcal{D}_{\infty}(\disk^-)  \]
 where there are $n$ summands on the right hand side.
 Given an $n$-tuple $f=(f_1,\ldots,f_n)$ of conformal maps $f_i:\disk^+ \rightarrow \Omega^{+}_i$,
 we define an isomorphism
 \begin{equation}
 \begin{aligned} \label{eq:definition}
  \mathbf{I}_f:\mathcal{D}_\infty(\disk^-)^n &
     \longrightarrow \mathcal{D}_\infty(\Sigma) \\
 (g_1,\ldots,g_n) & \longmapsto \sum_{i=1}^n \left. \mathrm{I}_{f_i}  g_i \right|_{\Sigma}.
 \end{aligned}
\end{equation}

 To show that this is indeed a bounded isomorphism, we require the following theorems.
 \begin{theorem}  \label{th:multiply_connected_restriction}
  Let $\Sigma$ be a multiply-connected domain in $\sphere$ containing $\infty$ bounded
  by non-intersecting quasicircles $\Gamma_i$, $i=1,\ldots,n$.   Any $h \in \mathcal{D}(\Sigma)$ has boundary values
 $\mathcal{H}(\Gamma_i)$  in the sense of Osborn for all $i=1,\ldots,n$.
 \end{theorem}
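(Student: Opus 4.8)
The plan is to reduce the multiply-connected statement to the simply-connected case, which is already available: each quasicircle $\Gamma_i$ bounds a quasidisk, and the Douglas-Osborn boundary values on a quasidisk were developed in \cite{SchippersStaubach_jump}. So the real content is to show that an arbitrary $h \in \mathcal{D}(\Sigma)$, which a priori is only holomorphic on $\Sigma$ with finite Dirichlet energy, has a well-defined restriction to a \emph{one-sided} neighbourhood of each $\Gamma_i$ lying in a quasidisk, to which the earlier theory applies.

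First I would fix an index $i$ and observe that $\Sigma \subseteq \Omega_i^-$, the unbounded quasidisk cut off by $\Gamma_i$. The obstacle is that $h$ is defined only on $\Sigma$, not on all of $\Omega_i^-$, so we cannot simply invoke the simply-connected result for $\Omega_i^-$ directly. The key step is to produce, for each $i$, a function $h_i \in \mathcal{D}(\Omega_i^-)$ which agrees with $h$ on a collar neighbourhood of $\Gamma_i$ inside $\Sigma$; then the boundary values of $h$ on $\Gamma_i$ are declared to be those of $h_i$, and membership in $\mathcal{H}(\Gamma_i) = \mathcal{H}_-(\Gamma_i)$ follows from the definition of the Douglas-Osborn space on the quasidisk $\Omega_i^-$. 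To construct $h_i$: choose a conformal map $g_i : \disk^- \to \Omega_i^-$; the preimage $g_i^{-1}(\Sigma)$ is a neighbourhood of $\mathbb{S}^1$ in $\disk^-$, and one can choose $\rho > 1$ so that the annulus $\{1 < |z| < \rho\}$ lies inside $g_i^{-1}(\Sigma)$. On this annulus $h \circ g_i$ is holomorphic with finite Dirichlet energy; multiply by a smooth cutoff supported near $\mathbb{S}^1$ and solve a $\bar\partial$-problem (or, more elementarily, use the Laurent/Fourier expansion on the annulus and keep only the part that extends holomorphically to $\disk^-$, discarding the tail that is holomorphic on $|z|<\rho$) to obtain a holomorphic function on $\disk^-$ with finite Dirichlet energy agreeing with $h \circ g_i$ near $\mathbb{S}^1$. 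Transporting back by $g_i$ gives $h_i \in \mathcal{D}(\Omega_i^-)$, using the conformal invariance \eqref{eq:composition_invariant} of the Dirichlet seminorm.

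The remaining step is to check that the boundary values so defined are genuinely those of $h$ in the sense of Osborn: the limiting values $H(q) = \lim_{z \to q} h(z)$ along hyperbolic geodesics of $\Sigma$ terminating at $q \in \Gamma_i$ agree (off a set of logarithmic capacity zero) with the corresponding limits for $h_i$ taken along hyperbolic geodesics of $\Omega_i^-$. This is because the two families of geodesics are asymptotic near $\Gamma_i$ — the inclusion $\Sigma \hookrightarrow \Omega_i^-$ is conformal and extends continuously to $\Gamma_i$ by Carath\'eodory — and $h = h_i$ on the collar, so both limits exist or fail to exist simultaneously and coincide when they exist; the exceptional set is the image under $g_i$ of a set of capacity zero in $\mathbb{S}^1$, as required. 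I expect the main obstacle to be the $\bar\partial$/Laurent surgery that produces $h_i$ with controlled Dirichlet norm — one must ensure the cutoff does not destroy holomorphicity or blow up the energy — though once the annular Fourier description is in hand this is routine, since discarding the part of the Laurent series holomorphic on $|z|<\rho$ only decreases the Dirichlet energy. A minor point to record is independence of the construction from the choices of $\rho$ and $g_i$, which follows because any two candidate extensions agree on a nonempty open collar and hence everywhere on the connected domain $\Omega_i^-$.
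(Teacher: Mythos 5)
Your overall strategy -- localize to a collar near $\Gamma_i$ and split off a piece that lives on the quasidisk $\Omega_i^-$ -- is essentially the paper's, but the pivotal object you posit cannot exist. You ask for $h_i \in \mathcal{D}(\Omega_i^-)$ agreeing with $h$ \emph{exactly} on a collar neighbourhood of $\Gamma_i$ inside $\riem$. Since $\riem$ is connected and contains the collar, the identity theorem would force $h_i = h$ on all of $\riem$, so $h_i$ would be a holomorphic extension of $h$ to $\Omega_i^-$, i.e.\ across every other boundary curve $\Gamma_j$, $j \neq i$, and into the holes $\Omega_j^+$; for $n \geq 2$ this fails already for $h(z) = 1/(z-a)$ with $a \in \Omega_j^+$. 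Consistently, neither of your constructions delivers exact agreement: the Laurent split of $h \circ g_i$ on $\{1<|z|<\rho\}$ gives $h \circ g_i = A + B$ with $A$ (the negative-power part) in $\mathcal{D}(\disk^-)$ and $B$ holomorphic on $\{|z|<\rho\}$, so the retained part $A$ differs from $h\circ g_i$ near $\mathbb{S}^1$ by the nonzero correction $B$; likewise the cutoff-plus-$\bar\partial$ surgery leaves a correction term that is holomorphic across $\mathbb{S}^1$ but not zero near it. Your final step explicitly uses ``$h = h_i$ on the collar,'' so as written the argument does not go through, and the closing remark about independence of $\rho$ and $g_i$ rests on the same impossible exact agreement.

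The repair is short and turns your proposal into the paper's proof: keep the decomposition $h \circ g_i = A + B$ (the paper produces it as a Plemelj--Sokhotski/Cauchy integral over an intermediate circle inside the collar), note that $A \in \mathcal{D}(\disk^-)$ has Osborn boundary values in $\mathcal{H}(\mathbb{S}^1)$, that $B$ is holomorphic across $\mathbb{S}^1$, hence continuous there and Dirichlet-bounded on either side, and conclude that the limiting values of $h \circ g_i$ along the relevant geodesics are those of $A + B$ and so lie in $\mathcal{H}(\mathbb{S}^1)$; transporting by the conformal map gives boundary values in $\mathcal{H}(\Gamma_i)$. Note also that the paper (Remark \ref{re:Osborn_multiple_meaning}) defines Osborn boundary values for the multiply connected domain along hyperbolic geodesics of $\Omega_i^-$, which eventually lie in $\riem$; thus your assertion that geodesics of $\riem$ and of $\Omega_i^-$ landing at the same point are ``asymptotic'' is both unproven as stated and unnecessary, since with that definition no comparison of geodesic families is needed at all.
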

 \begin{remark} \label{re:Osborn_multiple_meaning} For a multiply-connected domain of the type in $\mathrm{Theorem}$ $3.6$, by ``boundary values in the sense of
 Osborn'' we mean the following.  Let $h \in \mathcal{D}(\riem)$.   Choose one boundary curve $\Gamma_i$ and let $\Omega_i^-$ be the component of the complement containing $\riem$ as above.  For any fixed point $p \in \Omega_i^-$, let $\gamma_{p,q}$ be the hyperbolic geodesic ray beginning at $p$ and terminating at $q \in \Gamma_i$.  The limit of $h$ exists as $z \rightarrow q$ along $\gamma_{p,q}$ for all $q$ except possibly along a set $K$ which is the image of a Borel set $I$ of logarithmic capacity zero in $\mathbb{S}^1$ under a conformal map $f:\disk^+ \rightarrow \Omega_i^-$.  Note that the hyperbolic geodesic must eventually lie entirely in $\riem$ so this makes sense.  Furthermore, the boundary values in this sense are in $\mathcal{H}(\Gamma_i)$; i.e. there is a harmonic function $H$ in $\mathcal{D}(\Omega_i^-)$ whose boundary values are the same as those of $h$ up to a set of logarithmic capacity zero.
 \end{remark}
 \begin{proof}
  It is enough to prove this in the case that $\Gamma_i = \mathbb{S}^1$ and $\riem$ is an annulus $A_r = \{ z\,:\, r < |z| <1  \}$ for $0< r <1$.  To see this, choose a  boundary curve $\Gamma_i$ and a conformal map $f:\disk^+ \rightarrow \Omega_i^-$.  Choose $r$ such that $f(A_r) \subset \riem$.  Let $H$ be the function in $\mathcal{D}(\disk^+)$ whose boundary values agree with those of $h \circ f$, in the sense of Remark \ref{re:Osborn_multiple_meaning}.  Thus $H \circ f^{-1}$ is in $\mathcal{D}(\Omega_i^-)$ and has boundary values in $\mathcal{H}(\Gamma_i)$ agreeing with $h$.

 Restricting therefore to the case of an annulus $A_r$, assume that $h \in \mathcal{D}(A_r)$. Let $\gamma$ be the curve $|z|=s$ traced counterclockwise for some $s \in (r,1)$.  Let
 \[ G_\pm(z) = \pm \frac{1}{2\pi i} \int_\gamma \frac{h(\zeta)}{\zeta - z} \, d\zeta \]
 where the sign is chosen according to whether $z$ is in the bounded or unbounded component of $\gamma$ respectively.  Since $h$ is holomorphic in $A_r$, by deforming the curve we see that $G_\pm$ are holomorphic in $\disk^\pm$ respectively. By the classical Plemelj-Sokhotski jump relation, on $\gamma$ we have
 $h = G_+ + G_-$ (note that $h$, $G_+$ and $G_-$ are all holomorphic on a neighbourhood of  $\gamma$ so equality holds literally and not just in a limiting sense).  Thus $h = G_+ + G_-$ identically on $A_r$.

 Since $G_-$ is holomorphic in a neighbourhood of $\disk^-$, it is in $\mathcal{D}(\disk^-)$, and since it is continuous on $\mathbb{S}^1$ the limiting values exist everywhere in the sense of Osborn from within $A_r$. Fixing a $t \in (r,1)$, since $G_+ = h - G_-$
 and $h$ and $G_-$ are both in $\mathcal{D}(A_t)$, it follows that $G_+$ is in $\mathcal{D}(A_t)$ and therefore $G_+ \in \mathcal{D}(\disk^+)$.

 Since $h = G_+ + G_-$ on $A_r$ and both $G_+$ and $G_-$ have boundary values in $\mathcal{H}(\mathbb{S}^1)$ in the sense of Osborn, this completes the proof.
 \end{proof}
 \begin{theorem} \label{th:decomposition}
  Let $\Sigma$ be a multiply-connected domain in $\sphere$ containing $\infty$ bounded
  by non-intersecting quasicircles $\Gamma_i$, $i=1,\ldots,n$.  Let $\Omega_i^+$ and $\Omega_i^-$ be the bounded and unbounded components
  respectively of $\Gamma_i$.  Every element $h \in \mathcal{D}_\infty(\Sigma)$ has a unique representation
  \[  h = h_1 + \cdots + h_n, \quad h_i \in \mathcal{D}_\infty(\Omega_i^-),  \]
  where $h_i = \mathrm{P}_\infty(\Omega_i^-) h$ $($that is, we apply $P_\infty(\Omega_{i}^{-})$ to the boundary values of $h$$)$.
 \end{theorem}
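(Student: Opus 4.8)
The plan is to prove existence and uniqueness separately, leveraging Theorem \ref{th:multiply_connected_restriction} (which guarantees that $h$ has boundary values in $\mathcal{H}(\Gamma_i)$ for each $i$) and Theorem \ref{th:all_about_projection} (which guarantees that each $\mathrm{P}_\infty(\Omega_i^-)$ is a bounded map into $\mathcal{D}_\infty(\Omega_i^-)$). For existence, given $h \in \mathcal{D}_\infty(\Sigma)$, set $h_i = \mathrm{P}_\infty(\Omega_i^-) h$, where on the right we apply the Cauchy projection to the Osborn boundary values of $h$ on $\Gamma_i$; by the cited theorems each $h_i \in \mathcal{D}_\infty(\Omega_i^-)$, and in particular $h_i$ is holomorphic on $\Sigma$ and vanishes at $\infty$. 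It then suffices to show $h = \sum_{i=1}^n h_i$ on $\Sigma$. The natural approach is to fix $z \in \Sigma$ and represent $h(z)$ by a Cauchy-type integral over a contour in $\Sigma$ that separates $z$ from the boundary, then split this contour into pieces approaching the individual $\Gamma_i$. Concretely, one reduces as in the proof of Theorem \ref{th:multiply_connected_restriction} to a model situation via conformal maps $f_i : \disk^+ \to \Omega_i^-$, takes the integral of $h(\zeta)/(\zeta - z)$ over $f_i(\gamma_{r_i})$ with $r_i$ close to $1$, and lets $r_i \nearrow 1$; the Cauchy integral theorem on $\Sigma$ shows the sum of these boundary contributions (with the correct orientation conventions) recovers $h(z) - h(\infty) = h(z)$, while each individual limiting integral is precisely $\mathrm{P}_\infty(\Omega_i^-) h(z)$ by the definition of the projection and the fact (from Theorem \ref{th:all_about_projection}) that the limit may be taken from the $\Omega_i^-$ side using the Osborn extension $\mathbf{E}_- h$. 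Care is needed with the "vanishing at $\infty$" bookkeeping: since $\infty \in \Sigma$ and $h(\infty) = 0$, the Cauchy representation of $h$ in $\Sigma$ has no contribution from a contour near $\infty$, which is exactly why no residual constant appears.

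For uniqueness, suppose $h_1 + \cdots + h_n = 0$ on $\Sigma$ with $h_i \in \mathcal{D}_\infty(\Omega_i^-)$. Fix an index $j$. Then $h_j = -\sum_{i \neq j} h_i$ on $\Sigma$. Each $h_i$ with $i \neq j$ is holomorphic on $\Omega_i^-$, which contains $\Omega_j^+ \cup \Gamma_j$ (since the quasicircles are non-intersecting and $\Gamma_j$ together with its bounded complementary component $\Omega_j^+$ lies in $\Omega_i^-$ for every $i \neq j$); hence $\sum_{i \neq j} h_i$ extends holomorphically across $\Gamma_j$ into $\Omega_j^+$. Therefore $h_j$, originally defined on $\Omega_j^-$, extends to a function holomorphic on all of $\sphere$ and vanishing at $\infty$, so by Liouville's theorem $h_j \equiv 0$. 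Since $j$ was arbitrary, all $h_i$ vanish. (One should check that the extension argument respects boundary values: the identity $h_j = -\sum_{i\neq j} h_i$ holds on $\Sigma$, and since all functions involved have Osborn boundary values on $\Gamma_j$ that must agree up to logarithmic capacity zero, the glued function is genuinely holomorphic across $\Gamma_j$ — alternatively, one may invoke that $\mathrm{I}_{f_j}$ is injective, or argue directly that a $\mathcal{D}_\infty(\Omega_j^-)$ function agreeing with something holomorphic on $\Omega_j^+$ near $\Gamma_j$ glues up.)

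The main obstacle I expect is the existence direction, specifically justifying the contour-splitting Cauchy identity $h = \sum_i \mathrm{P}_\infty(\Omega_i^-) h$ rigorously given that $\Gamma_i$ are merely quasicircles, hence possibly non-rectifiable. One cannot integrate directly over $\Gamma_i$; instead one must work with the approximating curves $f_i(\gamma_{r_i})$ inside $\Sigma$ and pass to the limit, which is precisely the machinery packaged in Theorem \ref{th:all_about_projection} and its sources \cite{SchippersStaubach_jump,SchippersStaubach_Grunsky_quasicircle}. The key technical point is that for $z \in \Sigma$ fixed, choosing all $r_i$ close enough to $1$ so that $z$ lies outside every $f_i(\gamma_{r_i})$, the sum $\sum_i \mp\frac{1}{2\pi i}\int_{f_i(\gamma_{r_i})} \frac{h(\zeta)}{\zeta - z}\, d\zeta$ equals $h(z)$ exactly (no limit needed at this stage, by Cauchy's theorem in the subdomain of $\Sigma$ bounded by these curves, using $h(\infty)=0$), and only afterwards does one take $r_i \nearrow 1$ term by term to identify each summand with $\mathrm{P}_\infty(\Omega_i^-) h(z)$, invoking the reflection $\mathbf{E}_\pm$ to pass between the $\Omega_i^+$-side and $\Omega_i^-$-side formulas. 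The orientation conventions (the curve $f_i(\gamma_{r_i})$ bounds $\Omega_i^-$ negatively, matching the minus sign in the definition of $\mathrm{P}_\infty(\Omega_i^-)$) need to be tracked carefully but are routine.
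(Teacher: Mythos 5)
Your uniqueness argument is essentially the paper's own (glue $h_j=-\sum_{i\neq j}h_i$ across $\Gamma_j$ by analytic continuation on the overlap $\Sigma$, conclude the extension is holomorphic on $\sphere$ and vanishes at $\infty$), and your outer frame for existence (Cauchy's formula over curves $g_i(\gamma_r)$ inside $\Sigma$, eventually constant in $r$, summing to $h(z)$ since $h(\infty)=0$) also matches. But there is a genuine gap at the step you describe as ``each individual limiting integral is precisely $\mathrm{P}_\infty(\Omega_i^-)h(z)$ by the definition of the projection.'' The projection is defined through the Osborn extension $\mathbf{E}_\pm h$ of the boundary values of $h$ on $\Gamma_i$, i.e.\ through a function harmonic on \emph{all} of $\Omega_i^-$ (resp.\ $\Omega_i^+$), whereas your contour integrals involve $h$ itself, which is only holomorphic on $\Sigma$ and does not coincide with $\mathbf{E}_- h$ on the curves $g_i(\gamma_r)$; the two agree only on $\Gamma_i$ up to a set of capacity zero. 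So the equality of the two limiting integrals is not a matter of definition, nor is it what Theorem \ref{th:all_about_projection} asserts (that theorem only lets you compute the projection of given boundary values from either side, always with $\mathbf{E}_\pm h$ as integrand). This identification is exactly the main content of the paper's proof.

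The paper closes this gap as follows: set $H_i(z)=-\lim_{r\searrow 1}\frac{1}{2\pi i}\int_{g_i(\gamma_r)}\frac{h(\zeta)}{\zeta-z}\,d\zeta$, note $h=H_1+\cdots+H_n$ on $\Sigma$ by Cauchy's formula, and observe that $h-H_i=\sum_{k\neq i}H_k$ extends holomorphically to a neighbourhood of $\overline{\Omega_i^+}$, hence lies in $\mathcal{D}(\Omega_i^+)$ and in particular has boundary values in $\mathcal{H}(\Gamma_i)$ (Theorem \ref{th:multiply_connected_restriction} gives the same for $h$, hence for $H_i$). Then the jump-formula results of \cite{SchippersStaubach_jump} (Theorem 3.10 there) are invoked: $\mathrm{P}_\infty(\Omega_i^-)$ annihilates boundary values of $\mathcal{D}(\Omega_i^+)$ functions and acts as the identity on $\mathcal{D}_\infty(\Omega_i^-)$, so $\mathrm{P}_\infty(\Omega_i^-)h=\mathrm{P}_\infty(\Omega_i^-)H_i+\mathrm{P}_\infty(\Omega_i^-)(h-H_i)=H_i$. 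Without this decomposition-plus-projection step (or an equivalent argument showing $\lim_{r\searrow 1}\int_{g_i(\gamma_r)}\frac{h-\mathbf{E}_-h}{\zeta-z}\,d\zeta=0$, which is of comparable difficulty), your proof of $h=\sum_i\mathrm{P}_\infty(\Omega_i^-)h$ is incomplete; the rest of your outline is sound.
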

 \begin{proof}
   Let $g_i:\disk^- \rightarrow \Omega_i^-$
  be conformal maps such that $g_i(\infty)=\infty$.  For any fixed $z \in \Sigma$, there is   an $R >1$ close enough to $1$ so that $z$ is in the
  unbounded component of the complement of $g_i(\gamma_R)$ for all $i=1,\ldots,n$, and furthermore that
  $g_i(\gamma_r)$ is in $\Sigma$ for $1<r \leq R$.
  Define
  \[  H_i(z) =  - \lim_{r \searrow 1} \frac{1}{2 \pi i} \int_{g_i(\gamma_r) }
   \frac{h(\zeta)}{\zeta - z} \,d\zeta  \]
  where  $z \in \Omega_i^-$, so that $H_i$ is holomorphic in $\Omega_i^-$.   Observe that the integral is eventually independent of $r$.

 We first claim that $H_i = \mathrm{P}_\infty(\Omega_i^-) h$.  First, observe that by
 the Cauchy integral formula $h=H_1 + \cdots + H_n$ in $\riem$. For
 any fixed $i$, $h-H_i = \sum_{k \neq i} H_k$ extends to a holomorphic function
 in $\Omega_i^+$.  This is because every term on the right hand side is a holomorphic
 function in $\Omega_j^-$ for some $j \neq i$, and thus is holomorphic on $\Omega_i^+$
 since $\Omega_i^+ \subset \Omega_j^-$ for all $j \neq i$.  Also note that
 since the closure of $\Omega_i^+$ is compactly contained in $\Omega_j^-$, the
 extension of $h-H_i$ is in $\mathcal{D}(\Omega_i^+)$.  In particular, the boundary values of $h-H_i$ are in $\mathcal{H}(\Gamma_i)$.  By Theorem \ref{th:multiply_connected_restriction}, $h$ has a restriction to the boundary in $\mathcal{H}(\Gamma_i)$.  Thus $H_i = h - (h-H_i)$ has a restriction to the boundary in
 $\mathcal{H}(\Gamma_i)$.

 Thus we may apply $\mathrm{P}_\infty(\Omega_i^-)$ to the boundary values of both $h$ and $H_i$.  Since $h-H_i$ extends to a function in $\mathcal{D}(\Omega_i^+)$, by \cite[Theorem 3.10]{SchippersStaubach_jump}, $\mathrm{P}_\infty(\Omega_i^-) [h - H_i ] =0$ (where by this we mean of course the application of the projection to the boundary values of $h-H_i$).  Again applying \cite[Theorem 3.10]{SchippersStaubach_jump} we obtain that
$\mathrm{P}(\Omega_i^-) H_i = H_i$, which proves that $\mathrm{P}(\Omega_i^-) h
= H_i$ as claimed.

  Now choose $s$ such that $1<s<R$.
  By the Cauchy integral formula,
  since $h$ is holomorphic on $\riem$,
  \begin{align*}
    h(z) & = - \sum_{i=1}^n \frac{1}{2 \pi i} \int_{g_i(\gamma_s)} \frac{h(\zeta)}{\zeta - z} d\zeta \\
    & = - \sum_{i=1}^n   \lim_{r \searrow 1} \frac{1}{2 \pi i} \int_{g_i(\gamma_r)} \frac{h(\zeta)}{\zeta - z} d\zeta \\
    & =  \sum_{i=1}^n \left[ \mathrm{P}_\infty(\Omega_i^-) h \right](z)
  \end{align*}
  where in the last step we have used Theorem \ref{th:all_about_projection}. Now
  defining $h_i = \mathrm{P}_\infty(\Omega_i^-) h$, completes the proof of the existence of the representation.

  To show that the representation is unique, let $u_1 + \cdots + u_n = h$ be another representation
  with $u_i \in \mathcal{D}_\infty(\Omega_i^-)$ for each $i$.  We then have that
  \[  - h_1 + u_1 = h_2 - u_2 + \cdots + h_n - u_n.  \]  Now the left hand side is holomorphic in
  $\Omega_1^-$, while the right hand side is holomorphic in $\Omega_2^- \cap \cdots \cap \Omega_n^-$,
  which contains the closure of $\Omega_1^+$.  Since the intersection of the domains on the left and the right
  hand sides is non-empty (it is in fact $\Sigma$), this shows that $h_1 - u_1$ has a holomorphic
  extension to $\sphere$ and therefore is a constant.  Since $h_1$ and $u_1$ both vanish at $\infty$,
  $h_1 = u_1$.  Continuing in this way, we can show that $h_i=u_i$ for all $i=1,\ldots, n$.
 \end{proof}

 Given this theorem, we can now prove the following.
 \begin{theorem}  \label{th:If_iso_general} Let $\Sigma$, $\Gamma_i$, $\Omega_i^+$, and $\Omega_i^-$ be as in Theorem \ref{th:decomposition}.
  Let $f=(f_1,\ldots,f_n)$ where $f_i:\disk^+ \rightarrow \Omega_i^+$ is a conformal map for $i=1,\ldots,n$.
  The map $\mathbf{I}_f$ is a bounded isomorphism.
 \end{theorem}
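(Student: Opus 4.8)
The plan is to assemble the result from the pieces already in hand: the single-curve isomorphism theorem (Theorem~\ref{th:If_isomorphism_Grunsky_bounded}) and the decomposition theorem (Theorem~\ref{th:decomposition}). First I would observe that each summand map $\mathrm{I}_{f_i}:\mathcal{D}_\infty(\disk^-)\to\mathcal{D}_\infty(\Omega_i^-)$ is a bounded isomorphism by Theorem~\ref{th:If_isomorphism_Grunsky_bounded}. Since the closure of $\Omega_i^+$ is compactly contained in $\Omega_j^-$ for every $j\neq i$, a function in $\mathcal{D}_\infty(\Omega_i^-)$ restricts to a holomorphic function on $\riem\subset\Omega_i^-$, and the restriction operator $\mathcal{D}_\infty(\Omega_i^-)\to\mathcal{D}_\infty(\riem)$ is bounded (the Dirichlet integral over the smaller domain is dominated by that over the larger). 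Hence $\mathbf{I}_f=\sum_{i=1}^n \left.\mathrm{I}_{f_i}(\cdot)\right|_\riem$ is a bounded linear map into $\mathcal{D}_\infty(\riem)$; I should also note $h(\infty)=0$ is preserved since each $\mathrm{I}_{f_i}g_i$ vanishes at $\infty$.

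Next I would prove surjectivity. Given $h\in\mathcal{D}_\infty(\riem)$, Theorem~\ref{th:decomposition} provides a unique decomposition $h=h_1+\cdots+h_n$ with $h_i=\mathrm{P}_\infty(\Omega_i^-)h\in\mathcal{D}_\infty(\Omega_i^-)$. Since $\mathrm{I}_{f_i}$ is onto $\mathcal{D}_\infty(\Omega_i^-)$, choose $g_i\in\mathcal{D}_\infty(\disk^-)$ with $\mathrm{I}_{f_i}g_i=h_i$; then $\mathbf{I}_f(g_1,\ldots,g_n)=h$. For injectivity, suppose $\mathbf{I}_f(g_1,\ldots,g_n)=0$, i.e. $\sum_i\left.\mathrm{I}_{f_i}g_i\right|_\riem=0$ on $\riem$. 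Set $h_i=\mathrm{I}_{f_i}g_i\in\mathcal{D}_\infty(\Omega_i^-)$. Then $h_1+\cdots+h_n=0$ on $\riem$ gives two representations of the zero function (namely $0=0+\cdots+0$ and $0=h_1+\cdots+h_n$), so the uniqueness clause of Theorem~\ref{th:decomposition} forces each $h_i=0$; since each $\mathrm{I}_{f_i}$ is injective, each $g_i=0$. Thus $\mathbf{I}_f$ is a bounded bijection.

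Finally, boundedness of the inverse follows from the open mapping theorem once one knows all spaces involved are Banach (indeed Hilbert) spaces; alternatively one can exhibit $\mathbf{I}_f^{-1}$ explicitly as $h\mapsto(\mathrm{I}_{f_1}^{-1}\mathrm{P}_\infty(\Omega_1^-)h,\ldots,\mathrm{I}_{f_n}^{-1}\mathrm{P}_\infty(\Omega_n^-)h)$, which is bounded because each $\mathrm{P}_\infty(\Omega_i^-)$ is bounded (Theorem~\ref{th:all_about_projection}) and each $\mathrm{I}_{f_i}^{-1}$ is bounded (Theorem~\ref{th:If_isomorphism_Grunsky_bounded}); one must then check that this candidate is a genuine two-sided inverse, which again reduces to the existence-and-uniqueness statement of Theorem~\ref{th:decomposition} together with the identity $\mathrm{P}_\infty(\Omega_i^-)h_j=0$ for $j\neq i$ used in its proof. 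I expect the only subtlety—hardly an obstacle given the machinery already developed—is bookkeeping the distinction between a function on $\Omega_i^-$ and its restriction to $\riem$, and making sure the projection $\mathrm{P}_\infty(\Omega_i^-)$ is applied to the boundary values consistently in the sense of Remark~\ref{re:Osborn_multiple_meaning}; everything else is a formal consequence of the two cited theorems.
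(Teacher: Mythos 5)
Your proposal is correct and follows essentially the same route as the paper: boundedness from the inclusion $\riem\subset\Omega_i^-$ together with Theorem~\ref{th:If_isomorphism_Grunsky_bounded}, bijectivity from the existence-and-uniqueness decomposition of Theorem~\ref{th:decomposition} combined with each $\mathrm{I}_{f_i}$ being an isomorphism, and the open mapping theorem for the bounded inverse. The explicit formula you give for $\mathbf{I}_f^{-1}$ is a harmless extra, not a different argument.
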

 \begin{proof}
  For any $h=(h_1,\ldots,h_n) \in \mathcal{D}_\infty(\disk^-)^n$, since $\Sigma \subset \Omega_i^-$ for all $i$, using Minkowski's inequality
  \begin{equation*}
    \| \mathbf{I}_f h \|_{\mathcal{D}_\infty(\Sigma)}  \leq
    \sum_{i=1}^n \| \mathrm{I}_{f_i} h_i \|_{\mathcal{D}_\infty(\Sigma)}  =
    \sum_{i=1}^n \left( \iint_{\Sigma} |(\mathrm{I}_{f_i} h_i)'|^2 \,dA \right)^{1/2}
     \leq \sum_{i=1}^n \| \mathrm{I}_{f_i} h_i \|_{\mathcal{D}_\infty(\Omega_i^-)}.
  \end{equation*}
  Since $\mathrm{I}_{f_i}$ is bounded for each $i$ by Theorem \ref{th:If_isomorphism_Grunsky_bounded}, this proves that
  $\mathbf{I}_f$ is bounded.  By Theorems \ref{th:If_isomorphism_Grunsky_bounded} and \ref{th:decomposition}, $\mathbf{I}_f$ is a bijection, so by
  the open mapping theorem, $\mathbf{I}_f$ is a bounded isomorphism.
 \end{proof}
 We conclude the section with two remarks to place the results in context.
 \begin{remark}
  In the statement of $\mathrm{Theorem}$ $\mathrm{\ref{th:decomposition}}$, the projection operators
  $P_\infty(\Omega_i^-)$ are defined using Cauchy integrals of the harmonic extension of $\left. h \right|_{\Gamma_i}$ to the {\it complement} $\Omega_i^+$ of $\Omega_i^-$.
  If we define the projection operator using integrals along curves inside $\riem \subset \Omega_i^-$, then the decomposition exists for much wider classes of functions, see e.g. \cite[Theorem 10.12]{DurenHpbook} for the case of Hardy spaces.

However, the definition of Faber operator uses a pull-back under conformal maps onto the complements $\Omega_i^+$.  Thus we need to use the fact that the projection using Cauchy integrals over curves approaching the boundary from the outside of $\riem$ has the same result as the projection defined using curves approaching from the outside.
 This makes the use of $\mathrm{Theorem}$ $\mathrm{\ref{th:all_about_projection}}$ unavoidable.  Furthermore, this theorem cannot be extended to more general function spaces such as Hardy spaces $($see the introduction$)$.
 \end{remark}
  \begin{remark}
   The proofs of $\mathrm{Theorems}$ $\mathrm{\ref{th:decomposition}}$ and $\mathrm{\ref{th:If_iso_general}}$ also require the fact that
  $\Gamma$ is a quasicircle.  This is because the statement and proof of $\mathrm{Theorem}$ $\mathrm{\ref{th:all_about_projection}}$ in turn requires the fact that any Dirichlet bounded function on $\Omega_i^-$ has a reflection, i.e. a Dirichlet-bounded harmonic function
  on $\Omega_i^+$ with the same boundary values.   That this is true for quasicircles is not entirely obvious, and was obtained in \cite[Theorem 3.5]{SchippersStaubach_jump}; recall also that for Dirichlet spaces this holds if and only if the curve is a quasicircle \cite{SchippersStaubach_Grunsky_quasicircle}.
 \end{remark}

 \end{subsection}
 \begin{subsection}{Multiple Faber series}
  As an application, we show that every function $h$ in $\mathcal{D}_\infty(\riem)$ has a unique multiple
  Faber series.  This polynomial approximation converges uniformly on compact subsets of $\riem$.
  This application also illustrates the meaning of the isomorphism $\mathbf{I}_f$.

  Let $\riem$, $\Omega^\pm$, and $f_k:\disk^+ \rightarrow \Omega_k^+$ be as in the previous section.
  There are many equivalent definitions of Faber polynomials, see \cite{Pommerenkebook,Suetin_monograph}.
  In our setting, for integers $m>0$ we define the $m$th Faber polynomial of $f_k$ by
   \begin{equation}
   \Phi^k_m(w) = -  \frac{1}{2 \pi i} \int_{f_k(\gamma_r)} \frac{(f_k^{-1}(\zeta))^{-m}}
   {\zeta-z} \,d\zeta
  \end{equation}
  where $\gamma_r$ is the circle $|\zeta|=r$ traced counterclockwise for some fixed $r \in (0,1)$ such that
  $z$ is in the unbounded component of the complement of $f(\gamma_r)$.
  
  Let $p_i=f(0)$.  $\Phi^k_m(z)$ is a polynomial of degree $m$ in $1/(z-p_i)$.  To see this, expand $f_k^{-1}(\zeta)^{-m}$ in a Laurent series
  \begin{equation} \label{eq:fminusm_expansion}
    f_k^{-1}(\zeta)^{-m} = \frac{c_{-m}}{(\zeta - p_i)^m} + \cdots \frac{c_1}{\zeta - p_i} + c_0 + c_{-1} (\zeta - p_i) + \cdots
  \end{equation}
  in  $0< |\zeta -p_i| < \delta$ for $\delta>0$ sufficiently small, and deform the curve $f_k(\gamma_r)$
  to $|\zeta - p_i| = \delta/2$.  We then see that $\Phi_m^k$ is the principle part of (\ref{eq:fminusm_expansion}).  
  By \cite[Theorem 3.4]{SchippersStaubach_Grunsky_quasicircle}, we have
  \[   \Phi^k_m = I_{f_k} (z^{-m})  \]
  (note that by $z^{-m}$ we mean the function $z \mapsto z^{-m} \in \mathcal{D}_\infty(\disk^-)$).

   We define a multiple Faber series of a holomorphic function $h$ on $\riem$ to be a series of the form
   \[  h(z) = \sum_{m=1}^\infty \sum_{k=1}^n a^k_m \Phi^k_m(z)  \]
   for complex constants $a^k_m$, converging uniformly on compact subsets of $\riem$.  We then have the following.
   \begin{theorem}  Let $\Sigma$ be a multiply-connected domain in $\sphere$ containing $\infty$ bounded
  by non-intersecting quasicircles $\Gamma_i$, $i=1,\ldots,n$.  Let $\Omega_i^+$ and $\Omega_i^-$ be the bounded and unbounded components
  respectively of $\Gamma_i$.  Let $f_i:\disk^+ \rightarrow \Omega_i^+$ be conformal maps for $i=1,\ldots,n$. Then every $h \in \mathcal{D}(\Sigma)$ has a unique Faber series expansion
  \[  h(z) = \sum_{m=1}^\infty \sum_{k=1}^n a^k_m \Phi^k_m(z)  \]
  which converges uniformly on compact subsets of $\riem$.
  The coefficients $a^k_m$ are the coefficients of the expansion
   \[   g_k(z)= \sum_{m=1}^\infty a^k_m z^{-m}             \]
   where
   \[  (g_1,\ldots,g_n) = \mathbf{I}_f^{-1}(h).   \]
   \end{theorem}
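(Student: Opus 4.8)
The plan is to derive the whole statement from the isomorphism $\mathbf{I}_f$ of Theorem~\ref{th:If_iso_general} together with the identity $\Phi^k_m=\mathrm{I}_{f_k}(z^{-m})$. First note that, since every $\Phi^k_m$ vanishes at $\infty$, the asserted expansion can only hold when $h(\infty)=0$, so I will work with $h\in\mathcal{D}_\infty(\Sigma)$. \textbf{Existence.} I would set $(g_1,\dots,g_n):=\mathbf{I}_f^{-1}(h)\in\mathcal{D}_\infty(\disk^-)^n$. Each $g_k$ is holomorphic on $\disk^-$ and vanishes at $\infty$, hence has a Laurent expansion $g_k(z)=\sum_{m\ge1}a^k_m z^{-m}$ converging uniformly on $\{|z|\ge\rho\}$ for every $\rho>1$; the elementary identity $\|g_k\|^2_{\mathcal{D}_\infty(\disk^-)}=\pi\sum_{m\ge1}m|a^k_m|^2$ shows the partial sums $S_N g_k=\sum_{m=1}^N a^k_m z^{-m}$ converge to $g_k$ in $\mathcal{D}_\infty(\disk^-)$. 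Applying the bounded linear operator $\mathbf{I}_f$ and using linearity of each $\mathrm{I}_{f_k}$ together with $\Phi^k_m=\mathrm{I}_{f_k}(z^{-m})$, the partial sums $\sum_{m=1}^N\sum_{k=1}^n a^k_m\Phi^k_m$ equal $\mathbf{I}_f(S_N g_1,\dots,S_N g_n)$ restricted to $\Sigma$ and hence converge to $h$ in $\mathcal{D}_\infty(\Sigma)$. Finally I would invoke the standard fact that convergence in the Dirichlet norm implies uniform convergence on compact subsets (bound $u'$ pointwise by its $L^2$ norm on a small disk via the mean value property, then integrate, using that all the functions vanish at the common point $\infty$). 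This gives the expansion and identifies its coefficients with the Laurent coefficients of the components of $\mathbf{I}_f^{-1}(h)$.

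\textbf{Uniqueness.} By subtracting two expansions of $h$ it suffices to show: if $T_N:=\sum_{m=1}^N\sum_{k=1}^n c^k_m\Phi^k_m\to0$ uniformly on compact subsets of $\Sigma$, then all $c^k_m=0$. My plan is to decouple the sum into its $n$ blocks and reduce to the simply connected case. Each $\Phi^k_m$ is rational, holomorphic on $\sphere\setminus\{p_k\}$ with $p_k=f_k(0)$, vanishes at $\infty$, and is a polynomial of degree $m$ in $1/(z-p_k)$ with no constant term; so $T_N^{(j)}:=\sum_{m=1}^N c^j_m\Phi^j_m$ is precisely the principal part of $T_N$ at $p_j$, the other blocks being holomorphic there. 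Choosing a Jordan curve $\sigma_j\subset\Sigma$ lying in $\Omega_j^-$ near $\Gamma_j$ and encircling $\Omega_j^+$ (legitimate since a neighbourhood of $\Gamma_j$ in $\Omega_j^-$ lies in $\Sigma$), Cauchy's formula and a contour deformation from a small loop around $p_j$ out to $\sigma_j$ give $T_N^{(j)}(w)=-\tfrac1{2\pi i}\int_{\sigma_j}T_N(\zeta)(\zeta-w)^{-1}\,d\zeta$ for $w$ in the unbounded complementary component of $\sigma_j$. Since $T_N\to0$ uniformly on the compact set $\sigma_j$, this forces $T_N^{(j)}\to0$ uniformly on compact subsets of that component, and, letting $\sigma_j\to\Gamma_j$, uniformly on compact subsets of all of $\Omega_j^-$. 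Thus the problem reduces block by block to the simply connected statement that a Faber series of a single quasidisk converging locally uniformly to $0$ has vanishing coefficients, which is the uniqueness half of Shen's single-domain theorem \cite{Shen_Faber} (see also \cite{Cavus}); applied to each $j$ it gives $c^j_m=0$ for all $m,j$.

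\textbf{Expected obstacle.} The existence direction is essentially bookkeeping around Theorem~\ref{th:If_iso_general} and the Parseval-type identity on $\mathcal{D}_\infty(\disk^-)$. The substantive difficulty is uniqueness, and within it the passage from merely locally uniform convergence on $\Omega_j^-$ — which controls nothing near the boundary quasicircle, where Dirichlet energy may concentrate — to a statement about the $\mathcal{D}_\infty(\disk^-)$-coefficients. This is exactly what the simply connected Faber uniqueness theorem supplies, and it cannot be replaced by a soft argument using only that $\mathbf{I}_f$ is an isomorphism, because a locally uniformly convergent series of Dirichlet functions need not converge in the Dirichlet norm. The reduction to the simply connected case by extracting principal parts is elementary but does require care with the deformations and with the fact that $T_N$ is holomorphic only on $\sphere\setminus\{p_1,\dots,p_n\}$.
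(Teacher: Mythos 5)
Your argument is correct, but it takes a partly different route from the paper's proof. The paper obtains existence by decomposing $h=h_1+\cdots+h_n$ with $h_k\in\mathcal{D}_\infty(\Omega_k^-)$ via Theorem \ref{th:decomposition}, and then applying the single-domain Dirichlet Faber series theorem \cite[Theorem 3.17]{SchippersStaubach_Grunsky_quasicircle} (essentially Shen's result) to each block, so that existence, the identification $\mathrm{I}_{f_k}g_k=h_k$, and uniqueness are all inherited from the simply connected case together with the isomorphism of Theorem \ref{th:If_iso_general}. You instead prove existence directly: the Laurent partial sums of $g_k=(\mathbf{I}_f^{-1}h)_k$ converge in $\mathcal{D}_\infty(\disk^-)$ by the Parseval-type identity, $\mathbf{I}_f$ is bounded, $\Phi^k_m=\mathrm{I}_{f_k}(z^{-m})$, and Dirichlet-norm convergence of functions vanishing at $\infty$ upgrades to locally uniform convergence; this bypasses Theorem \ref{th:decomposition} and the existence half of the single-domain theorem. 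For uniqueness you still invoke the single-domain uniqueness (\cite{Shen_Faber}, or \cite[Theorem 3.17]{SchippersStaubach_Grunsky_quasicircle}), but you make explicit a reduction the paper leaves implicit: extracting the $j$-th principal part by integrating over curves $g_j(\gamma_r)\subset\Sigma$ near $\Gamma_j$ shows that each block of a locally uniformly null multiple Faber series tends to zero locally uniformly on all of $\Omega_j^-$, after which the simply connected uniqueness applies blockwise. This is a worthwhile addition, since the paper's closing appeal to the isomorphism by itself only settles uniqueness among expansions whose coefficient sequences define elements of $\mathcal{D}_\infty(\disk^-)^n$, whereas the stated definition of a multiple Faber series allows arbitrary locally uniformly convergent series; your contour argument closes exactly that gap, as you anticipate in your remark on the expected obstacle. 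Your observation that $h$ should be taken in $\mathcal{D}_\infty(\Sigma)$ is likewise consistent with the paper, whose proof tacitly assumes this through Theorem \ref{th:decomposition}.
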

   \begin{proof}
    Let $h= h_1 + \cdots h_n$ be the decomposition obtained in Theorem \ref{th:decomposition}.
    Applying \cite[Theorem 3.17]{SchippersStaubach_Grunsky_quasicircle} to each $h_k \in \mathcal{D}(\Omega_k^-)$, we see that each $h_k$ has a unique Faber series
    \[  h_k(z)= \sum_{m=1}^\infty a^k_m \Phi^k_m(z)   \]
    which converges uniformly on compact subsets of $\Omega_i^+$.
    Setting $g_k(z) = \sum_{m=1}^\infty a^k_m z^{-m}$ we have $I_{f_k} (g_k) = h_k$ and
    $\mathbf{I}_f (g_1,\ldots,g_n) = h_1 + \cdots + h_n = h$.  Since $\mathbf{I}_f$ is an
    isomorphism by Theorem \ref{th:If_iso_general}, this proves the claim.
   \end{proof}
 \end{subsection}
\end{section}

\begin{section}{Dirichlet spaces as graphs of Grunsky operators}  \label{se:Dirichlet_Grunsky}
\begin{subsection}{The Dirichlet space of a multiply-connected domain as the graph of a Grunsky operator}
 In this section we describe the Dirichlet space of $\Sigma$ as the graph of a certain generalized
 Grunsky operator.  We first show this for $\riem$ simply-connected, and then extend to the general
 case.

Let $\Gamma$ be a quasicircle not containing $\infty$,
 and let $\Omega^+$ and $\Omega^-$ be the bounded and unbounded components of the complement
 respectively.  Let $f:\disk^+ \rightarrow \Omega^+$ be a fixed conformal map.
 Any element $h \in \mathcal{H}(\Gamma)$ satisfies $\mathcal{C}_f h \in \mathcal{H}(\mathbb{S}^1)$
 and therefore has a Fourier series
 \[  h \circ f(e^{i\theta}) = \sum_{n=-\infty}^\infty a_n e^{in \theta} \]
 which converges to $h \circ f$ almost everywhere (in fact, except on a set of logarithmic capacity zero).
 We will ignore the constant terms.  Define
 \[  \mathcal{H}_0(\mathbb{S}^1) = \left\{ h(e^{i\theta}) = \sum_{n=-\infty, n \neq 0}^\infty a_n e^{i n \theta} \in \mathcal{H}(\mathbb{S}^1)  \right\}.  \]
Note that this has the decomposition $\mathcal{H}_0(\mathbb{S}^1) = \mathcal{D}_\infty(\disk^-) \oplus
 \mathcal{D}_0(\disk^+)$ obtained by replacing $e^{i\theta}$ by $z$.

 Define
 \[  \hat{\mathcal{C}}_f h = h \circ f - {h \circ f}(0).  \]
 Note that with this definition, we have $\mathrm{I}_f= \mathrm{P}_\infty(\Omega^-) \hat{\mathcal{C}}_{f^{-1}}$
 and $\Gr_f = \mathrm{P}_\infty(\Omega^-) \hat{\mathcal{C}}_f \mathrm{I}_f$. We will use these facts without
 comment ahead.\\

 We pose the following question:\\ 

 {\bf Question (1)}:
  Which Fourier series in $\mathcal{H}_0(\mathbb{S}^1)$ arise as $\hat{\mathcal{C}}_f h$
 for some $h \in \mathcal{D}_\infty(\Omega^-)$?
 That is, how can one characterize the set
  \[  \mathcal{W} = \hat{\mathcal{C}}_f \mathcal{D}_{\infty}(\Omega^-)?   \]
 The answer to this question is a characterization of the set of boundary
 values of $\mathcal{D}(\Omega^-)$.  
 
 The following theorem answers this question.
 \begin{theorem} \label{th:Grunsky_graph_simply} Let $\Gamma$ be a quasicircle not containing $\infty$, and let $\Omega^+$
  and $\Omega^-$ be the bounded and unbounded components respectively.  Let $f:\disk^+ \rightarrow
  \Omega^+$ be a conformal map.  Then
  \[  \mathrm{P}_\infty(\disk^-) \hat{\mathcal{C}}_f \mathrm{I}_f = \mathrm{Id}  \]
  where $\mathrm{Id}:\mathcal{D}_\infty(\disk^-) \rightarrow \mathcal{D}_\infty(\disk^-)$ is the identity on
  $\mathcal{D}(\disk^-)$, and
  \[  \mathrm{P}_0(\disk^+) \hat{\mathcal{C}}_f \mathrm{I}_f = \Gr_f.      \]
  Thus $\mathcal{W}$ is the graph of $\Gr_f$ in
  $\mathcal{H}_0(\mathbb{S}^1) =  \mathcal{D}_\infty(\disk^-) \oplus \mathcal{D}_0(\disk^+)$.
 \end{theorem}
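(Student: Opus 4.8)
The plan is to reduce the statement to the Plemelj--Sokhotski jump decomposition of $\mathcal{H}(\Gamma)$ established in \cite{SchippersStaubach_jump}, together with the conformal naturality of the Douglas--Osborn boundary spaces in Theorem \ref{th:Osborn_oneside_conf_inv}. Fix $h\in\mathcal{D}_\infty(\disk^-)$ and regard it, via its Osborn boundary values, as an element of $\mathcal{H}(\mathbb{S}^1)$. Since $f$ extends quasiconformally to $\sphere$, Theorem \ref{th:Osborn_oneside_conf_inv} gives $\mathcal{C}_{f^{-1}}h=h\circ f^{-1}\in\mathcal{H}(\Gamma)$, and the jump formula writes it, in $\mathcal{H}(\Gamma)$, as
\[
  h\circ f^{-1}=u+\mathrm{I}_f h ,
\]
where $u$ is holomorphic on $\Omega^+$ with finite Dirichlet energy (namely $u=\mathrm{P}_{f(0)}(\Omega^+)(h\circ f^{-1})$ up to an additive constant) and $\mathrm{I}_f h=\mathrm{P}_\infty(\Omega^-)(h\circ f^{-1})$ by the definition of $\mathrm{I}_f$. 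Composing this identity of boundary functions with the boundary homeomorphism $f|_{\mathbb{S}^1}$, and using that $f|_{\mathbb{S}^1}$ and $f^{-1}|_{\Gamma}$ are mutually inverse, yields in $\mathcal{H}(\mathbb{S}^1)$
\[
  h=u\circ f+(\mathrm{I}_f h)\circ f .
\]

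Next I would apply $\mathrm{P}_\infty(\disk^-)$ to both sides. On the left, $\mathrm{P}_\infty(\disk^-)h=h$, because $\mathrm{P}_\infty(\disk^-)$ restricts to the identity on $\mathcal{D}_\infty(\disk^-)$; this is the disk case of \cite[Theorem 3.10]{SchippersStaubach_jump}, already used in the proof of Theorem \ref{th:decomposition}, and it can also be checked by a direct residue computation against $\gamma_r$. On the right, $\mathrm{P}_\infty(\disk^-)$ annihilates $u\circ f$: since $u$ is holomorphic with finite Dirichlet energy on $\Omega^+$ and $f:\disk^+\to\Omega^+$ is conformal, conformal invariance of the Dirichlet norm \eqref{eq:composition_invariant} gives $u\circ f\in\mathcal{D}(\disk^+)$, and $\mathrm{P}_\infty(\disk^-)$ kills the boundary values of $\mathcal{D}(\disk^+)$ functions, again by \cite[Theorem 3.10]{SchippersStaubach_jump}. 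Hence $h=\mathrm{P}_\infty(\disk^-)\bigl((\mathrm{I}_f h)\circ f\bigr)$; since $(\mathrm{I}_f h)\circ f$ differs from $\hat{\mathcal{C}}_f\mathrm{I}_f h$ only by the constant $\bigl((\mathrm{I}_f h)\circ f\bigr)(0)$, which $\mathrm{P}_\infty(\disk^-)$ kills, this is exactly $\mathrm{P}_\infty(\disk^-)\hat{\mathcal{C}}_f\mathrm{I}_f h=h$, the first asserted identity.

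The second identity is then immediate from the definition $\Gr_f=\mathrm{P}_0(\disk^+)\mathcal{C}_f\mathrm{I}_f$: since $\mathrm{P}_0(\disk^+)$ annihilates constants one has $\mathrm{P}_0(\disk^+)\mathcal{C}_f=\mathrm{P}_0(\disk^+)\hat{\mathcal{C}}_f$, so $\mathrm{P}_0(\disk^+)\hat{\mathcal{C}}_f\mathrm{I}_f=\Gr_f$. For the graph statement, I would first note that $\hat{\mathcal{C}}_f\mathrm{I}_f h$ has vanishing constant Fourier coefficient by the construction of $\hat{\mathcal{C}}_f$, hence lies in $\mathcal{H}_0(\mathbb{S}^1)=\mathcal{D}_\infty(\disk^-)\oplus\mathcal{D}_0(\disk^+)$, on which $\mathrm{P}_\infty(\disk^-)$ and $\mathrm{P}_0(\disk^+)$ are the two coordinate projections (each is the identity on its own summand and vanishes on the other). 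By the two identities above, the components of $\hat{\mathcal{C}}_f\mathrm{I}_f h$ are $h$ and $\Gr_f h$, i.e.\ $\hat{\mathcal{C}}_f\mathrm{I}_f h=(h,\Gr_f h)$. Since $\mathrm{I}_f$ maps $\mathcal{D}_\infty(\disk^-)$ onto $\mathcal{D}_\infty(\Omega^-)$ by Theorem \ref{th:If_isomorphism_Grunsky_bounded}, letting $h$ run over $\mathcal{D}_\infty(\disk^-)$ gives $\mathcal{W}=\hat{\mathcal{C}}_f\mathcal{D}_\infty(\Omega^-)=\{\,(h,\Gr_f h):h\in\mathcal{D}_\infty(\disk^-)\,\}$, which is the graph of $\Gr_f$.

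I expect the only genuine difficulty to be bookkeeping: one must consistently read every ``$\circ f$'' and ``$\circ f^{-1}$'' as the induced isomorphism of boundary-value spaces (legitimate precisely because $f$ is quasiconformally extendible, Theorem \ref{th:Osborn_oneside_conf_inv}), and one must invoke the two properties of the projection $\mathrm{P}_\infty$ from \cite[Theorem 3.10]{SchippersStaubach_jump}---identity on $\mathcal{D}_\infty(\disk^-)$, vanishing on $\mathcal{D}(\disk^+)$---at the right moments. It is the second of these that makes the cross term $u\circ f$ drop out, leaving $h$ on the nose.
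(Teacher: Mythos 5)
Your argument is correct, but it is organized differently from the paper's. The paper disposes of the first identity $\mathrm{P}_\infty(\disk^-)\hat{\mathcal{C}}_f\mathrm{I}_f=\mathrm{Id}$ in one line, by citing Equation 3.7 of \cite{SchippersStaubach_Grunsky_quasicircle}, and then, exactly as you do, reads off the second identity from the definition of $\Gr_f$ and the graph statement from the fact that $\mathrm{P}_\infty(\disk^-)$ and $\mathrm{P}_0(\disk^+)$ are the coordinate projections on $\mathcal{H}_0(\mathbb{S}^1)=\mathcal{D}_\infty(\disk^-)\oplus\mathcal{D}_0(\disk^+)$ together with surjectivity of $\mathrm{I}_f$ (Theorem \ref{th:If_isomorphism_Grunsky_bounded}). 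You instead re-derive the cited identity from first principles available inside this paper's framework: the Plemelj--Sokhotski jump decomposition $h\circ f^{-1}=u+\mathrm{I}_f h$ in $\mathcal{H}(\Gamma)$ from \cite{SchippersStaubach_jump}, conformal naturality of the boundary-value spaces (Theorem \ref{th:Osborn_oneside_conf_inv}) to transport the identity to $\mathcal{H}(\mathbb{S}^1)$, and the two facts from \cite[Theorem 3.10]{SchippersStaubach_jump} that $\mathrm{P}_\infty(\disk^-)$ is the identity on boundary values of $\mathcal{D}_\infty(\disk^-)$ and annihilates boundary values of $\mathcal{D}(\disk^+)$ (and constants) --- the same two properties the paper itself invokes in the proof of Theorem \ref{th:decomposition}. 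What your route buys is a self-contained proof of the key identity rather than an appeal to the external reference, at the cost of explicitly invoking the jump formula on the quasicircle; what the paper's route buys is brevity, since the identity is already recorded in \cite{SchippersStaubach_Grunsky_quasicircle}. The remaining steps (killing the constant via $\hat{\mathcal{C}}_f$, and letting $h$ range over $\mathcal{D}_\infty(\disk^-)$ to identify $\mathcal{W}$ with the graph) coincide with the paper's reasoning.
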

 \begin{proof}
  The first claim is \cite[Equation 3.7]{SchippersStaubach_Grunsky_quasicircle}.
  The second claim is the definition of the Grunsky operator.  The fact
  that $\mathcal{W}$ is the graph of $\Gr_f$ follows immediately.
 \end{proof}

  This also shows that for any $h \in \mathcal{W}$, the element $H \in \mathcal{D}(\Omega^-)$
  such that $\hat{\mathcal{C}}_f H = h$ is uniquely determined and is given by
  $H = \mathrm{I}_f \mathrm{P}_\infty(\disk^-) h$.

 \begin{remark}  The constant term can be included if desired, by replacing
  $\mathrm{P}_\infty(\disk^-)$ with the Cauchy integral
  (which does not annihilate the constant term), and altering the definition
  of the Grunsky operator accordingly.  The matrix of the new Grunsky operator would then contain the so-called
  logarithmic coefficients of $f$. Although the constant term is significant,
  it does not play an important role in this paper, and furthermore introduces some   notational complications
  in the multiply-connected case.  We omit the treatment here.
 \end{remark}

 We shall now proceed by defining a generalized Grunsky operator.  First we define block entries of the operator
 and show that they are bounded.
 \begin{theorem} \label{th:Grunsky_block_bounded}
  Let $\Sigma$ be a multiply-connected domain bounded by $n$ quasicircles
 $\Gamma_i$, let $\Omega_i^+$ and
 $\Omega_i^-$ be the bounded and unbounded components respectively.   Let $f=(f_1,\ldots,f_n)$
 where  $f_i:\disk^+ \rightarrow \Omega_i^+$ are conformal maps.  The operators
 \begin{align}  \label{eq:Grunsky_nonoverlap_defn}
  \Gr_{ji}(f): \mathcal{D}_\infty (\disk^-)&  \longrightarrow \mathcal{D}_0(\disk^+) \nonumber \\
  h & \longmapsto \mathrm{P}_0(\disk^+) \hat{\mathcal{C}}_{f_j} \mathrm{I}_{f_i} h.
 \end{align}
 are bounded for all $i,j=1,\ldots,n$.
 \end{theorem}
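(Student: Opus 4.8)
The plan is to reduce the boundedness of each block operator $\Gr_{ji}(f)$ to the boundedness of maps that were already established, by inserting an intermediate domain. Fix indices $i,j$. When $i=j$, the operator $\Gr_{ii}(f) = \mathrm{P}_0(\disk^+) \hat{\mathcal{C}}_{f_i} \mathrm{I}_{f_i} = \Gr_{f_i}$ is exactly the single-domain Grunsky operator, which is bounded by Theorem \ref{th:If_isomorphism_Grunsky_bounded}. So the real content is the off-diagonal case $i \neq j$. The key geometric observation is that when $i \neq j$, the closure of $\Omega_j^+$ is compactly contained in $\Omega_i^-$, so an element of $\mathcal{D}_\infty(\Omega_i^-)$ restricts to an honest holomorphic function on a neighbourhood of $\overline{\Omega_j^+}$.

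Concretely, first I would write $\mathrm{I}_{f_i} h \in \mathcal{D}_\infty(\Omega_i^-)$ for $h \in \mathcal{D}_\infty(\disk^-)$; this is bounded by Theorem \ref{th:If_isomorphism_Grunsky_bounded}. Next, since $\overline{\Omega_j^+} \subset \Omega_i^-$ is compact, the restriction operator $\mathcal{D}_\infty(\Omega_i^-) \to \mathcal{D}(\Omega_j^+)$ is bounded: the Dirichlet integral over $\Omega_j^+$ is dominated by the Dirichlet integral over $\Omega_i^-$, and one also controls the value at an interior point (or the $\mathcal{D}_0$-seminorm of $\hat{\mathcal{C}}_{f_j}$ of the restriction, which only sees derivatives) using a reproducing-kernel or Cauchy-integral estimate on a fixed annular neighbourhood separating $\overline{\Omega_j^+}$ from $\Gamma_i$. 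Then $\hat{\mathcal{C}}_{f_j}$ maps $\mathcal{D}(\Omega_j^+) \to \mathcal{D}_0(\disk^+) \subset \mathcal{H}_0(\mathbb{S}^1)$ boundedly by the conformal invariance of the Dirichlet seminorm (equation \eqref{eq:composition_invariant}), the subtraction of $(h\circ f_j)(0)$ being a bounded projection. Finally, $\mathrm{P}_0(\disk^+) : \mathcal{H}_0(\mathbb{S}^1) \to \mathcal{D}_0(\disk^+)$ is bounded by Theorem \ref{th:all_about_projection} applied to $\Gamma = \mathbb{S}^1$. Composing these four bounded maps gives that $\Gr_{ji}(f)$ is bounded. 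One should also check that the composition of these maps really equals $\mathrm{P}_0(\disk^+)\hat{\mathcal{C}}_{f_j}\mathrm{I}_{f_i}$ as written — here it helps that on $\overline{\Omega_j^+}$ the function $\mathrm{I}_{f_i}h$ is genuinely holomorphic, so "restrict then take boundary values in the sense of Osborn" and "take boundary values on $\Gamma_j$" give the same element of $\mathcal{H}(\Gamma_j)$ that $\hat{\mathcal{C}}_{f_j}$ acts on.

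The main obstacle is the middle step: producing the bounded restriction map $\mathcal{D}_\infty(\Omega_i^-) \to \mathcal{D}(\Omega_j^+)$ with a constant depending only on the geometry. The Dirichlet-energy part is immediate from domain monotonicity of $\iint |h'|^2\,dA$, but one must also bound whatever residual point-evaluation data enters $\hat{\mathcal{C}}_{f_j}$; since $\hat{\mathcal{C}}_{f_j}$ annihilates the constant, in fact only the seminorm $\|\cdot\|_{\mathcal{D}(\Omega_j^+)}$ matters, so the obstacle essentially dissolves. If one prefers to work with $\mathcal{H}_0(\mathbb{S}^1)$ directly one instead estimates $h \circ f_j$ on a circle $|z|=\rho$ with $f_j(|z|=\rho)$ inside the annular collar, using that $h$ is holomorphic there; this is a routine Cauchy-estimate argument. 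In either formulation the point is that $\Gamma_i$ and $\Gamma_j$ are disjoint, so all the nonsmoothness of the quasicircles is kept away from the transfer between domains, and the only "hard" inputs — the jump-formula boundedness of $\mathrm{I}_{f_i}$, $\Gr_{f_i}$, and $\mathrm{P}_0(\disk^+)$ — are quoted from Theorems \ref{th:If_isomorphism_Grunsky_bounded} and \ref{th:all_about_projection}.
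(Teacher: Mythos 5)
Your proposal is correct and follows essentially the same route as the paper: reduce the diagonal case to Theorem \ref{th:If_isomorphism_Grunsky_bounded}, and for $i \neq j$ use that $\Omega_j^+ \subseteq \Omega_i^-$ together with conformal invariance of the Dirichlet seminorm and domain monotonicity of the Dirichlet integral, so that $\| \hat{\mathcal{C}}_{f_j} h \|_{\mathcal{D}_0(\disk^+)} = \| h \|_{\mathcal{D}_{p_j}(\Omega_j^+)} \leq \| h \|_{\mathcal{D}_\infty(\Omega_i^-)}$. The only cosmetic difference is that the paper notes $\hat{\mathcal{C}}_{f_j}\mathrm{I}_{f_i}h$ already lies in $\mathcal{D}_0(\disk^+)$ (so $\mathrm{P}_0(\disk^+)$ is extraneous, cf.\ Remark \ref{re:projection_extraneous}), whereas you keep the projection and invoke its boundedness; both are fine.
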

 \begin{proof}
  By definition $\Gr_{ii}(f)=\Gr_{f_i}$.  Thus by Theorem \ref{th:If_isomorphism_Grunsky_bounded}
  it is bounded.  Now assume that $i \neq j$.  Since $\mathrm{P}(\disk^+)$ and $\mathrm{I}_{f_i}$ are bounded,
  it is enough to show that $\hat{\mathcal{C}}_{f_j}$ is a bounded operator on $\mathcal{D}_\infty(\Omega_i^-)$.
  Since $\Omega_j^+ \subseteq \Omega_i^-$ when $i \neq j$, for any $h \in \mathcal{D}_\infty(\Omega_i^-)$ we have
  that $\hat{\mathcal{C}}_{f_j} h  \in \mathcal{D}_0(\disk^+)$ and by a change of variables
  (setting $p_j = f_j(0)$)
  \[  \| \hat{C}_{f_j} h \|_{\mathcal{D}_0(\disk^+)} = \| h \|_{\mathcal{D}_{p_j}(\Omega_j^+)}
   \leq \| h \|_{\mathcal{D}_\infty(\Omega^-_i)}.  \]
 \end{proof}
 \begin{remark}  \label{re:projection_extraneous} Since $f_j(\disk^+) \subseteq \Omega_i^-$
 whenever $i \neq j$,
  the projection $P_0(\disk^+)$ can be removed when $i \neq j$.
 \end{remark}

 \begin{theorem}  \label{th:Grunsky_off_diagonal}
  Let $\Gamma_i$, $\Omega_i^+$, $\Omega_i^-$ and $f_i$ be as in Theorem \ref{th:Grunsky_block_bounded}
  for $i=1,\ldots,n$.     If $i \neq j$, then for $h \in \mathcal{D}_\infty(\disk^-)$,
  $\Gr_{ji}(f)$ has the integral expression
  \[ \left[ \Gr_{ji}(f) \right]h(z) = \frac{1}{\pi} \iint_{\disk^+} \left(
  \frac{f_i'(\zeta)}{f_i(\zeta)-f_j(z)} - \frac{f_i'(\zeta)}{f_i(\zeta) - f_j(0)} \right)
\mathbf{R}h(\zeta)  \, dA(\zeta).  \]
 \end{theorem}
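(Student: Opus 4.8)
The plan is to reduce the off-diagonal block $\Gr_{ji}(f)$ to a difference of two point-evaluations of the Faber isomorphism $\mathrm{I}_{f_i}$, and then to convert the resulting limit of Cauchy integrals into an area integral over $\disk^+$ by Green's theorem. First, since $i\neq j$ one has $f_j(\disk^+)\subseteq\Omega_j^+\subseteq\Omega_i^-$, so $\mathcal{C}_{f_j}\mathrm{I}_{f_i}h=(\mathrm{I}_{f_i}h)\circ f_j$ is already holomorphic on $\disk^+$; hence by Remark~\ref{re:projection_extraneous} the projection $\mathrm{P}_0(\disk^+)$ may be dropped, and
\[
\left[\Gr_{ji}(f)\right]h(z)=\hat{\mathcal{C}}_{f_j}\mathrm{I}_{f_i}h(z)=\mathrm{I}_{f_i}h(f_j(z))-\mathrm{I}_{f_i}h(f_j(0)).
\]
So it suffices to evaluate $\mathrm{I}_{f_i}h$ at the two points $f_j(z)$ and $f_j(0)$, which lie in $\Omega_i^-$. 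For such a point $w$, since $\overline{\Omega_j^+}$ is compactly contained in $\Omega_i^-$ it lies in the unbounded component of $\sphere\setminus f_i(\gamma_r)$ once $r$ is close enough to $1$, so formula~\eqref{eq:If_alternate_using_r} applies; the substitution $\zeta\mapsto f_i(\zeta)$ turns it into
\[
\mathrm{I}_{f_i}h(w)=-\lim_{r\nearrow1}\frac{1}{2\pi i}\int_{\gamma_r}\frac{f_i'(\zeta)}{f_i(\zeta)-w}\,(\mathbf{R}h)(\zeta)\,d\zeta ,
\]
and subtracting the expressions for $w=f_j(z)$ and $w=f_j(0)$ produces precisely the kernel in the statement.

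Next I would turn this limit of contour integrals into a single area integral. The decisive point, available exactly because $i\neq j$, is that $f_j(z)$ and $f_j(0)$ stay at a positive distance from $\overline{\Omega_i^+}=\overline{f_i(\disk^+)}$, so that the kernel
\[
k_z(\zeta):=\frac{f_i'(\zeta)}{f_i(\zeta)-f_j(z)}-\frac{f_i'(\zeta)}{f_i(\zeta)-f_j(0)}
\]
is holomorphic on all of $\disk^+$; moreover $|k_z|\le C|f_i'|$ and $\iint_{\disk^+}|f_i'|^2\,dA$ equals the area of $\Omega_i^+$, so $k_z$ lies in the Bergman space $A^2(\disk^+)$. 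For fixed $r<1$ the $1$-form $k_z\,(\mathbf{R}h)\,d\zeta$ is smooth on $\{|\zeta|\le r\}$, and since $k_z$ is holomorphic, $d\bigl(k_z\,(\mathbf{R}h)\,d\zeta\bigr)=k_z\,\partial_{\bar\zeta}(\mathbf{R}h)\,d\bar\zeta\wedge d\zeta$. Applying Green's theorem on the annulus $\{\varepsilon<|\zeta|<r\}$ and letting $\varepsilon\to0$ — the inner circle contributes nothing because $(\mathbf{R}h)(0)=h(\infty)=0$ — converts $\frac{1}{2\pi i}\int_{\gamma_r}k_z\,(\mathbf{R}h)\,d\zeta$ into $\frac{1}{\pi}\iint_{|\zeta|<r}k_z\,\partial_{\bar\zeta}(\mathbf{R}h)\,dA$, which is the area integral in the statement (with $\mathbf{R}h$ entering through its $\bar\partial$-density, $\partial_{\bar\zeta}(\mathbf{R}h)(\zeta)=-\bar\zeta^{-2}h'(1/\bar\zeta)$), restricted to the sub-disk.

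Finally I would pass to the limit $r\to1$. Here one uses that $\partial_{\bar\zeta}(\mathbf{R}h)\in L^2(\disk^+)$ — a change of variables gives $\|\partial_{\bar\zeta}(\mathbf{R}h)\|_{L^2(\disk^+)}=\|h\|_{\mathcal{D}_\infty(\disk^-)}$ — together with $k_z\in A^2(\disk^+)$, whence by Cauchy--Schwarz and dominated convergence the truncated integrals converge, as $r\nearrow1$, to $\frac{1}{\pi}\iint_{\disk^+}k_z\,\partial_{\bar\zeta}(\mathbf{R}h)\,dA$; combined with the first two steps this is the claimed formula. I expect the one genuinely delicate point to be exactly this passage to the limit: $\mathbf{R}h$ is merely harmonic and Dirichlet-bounded on $\disk^+$, not continuous up to $\mathbb{S}^1$, which is why the formulas above must be written as limits over $\gamma_r$. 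What makes the off-diagonal case easier than the diagonal identity of Theorem~\ref{th:Grunsky_integral} is that for $i\neq j$ the kernel $k_z$ extends holomorphically over all of $\disk^+$ and is square-integrable there, so no cancellation of singular behaviour near $\mathbb{S}^1$ is needed. Alternatively, since both sides are bounded and linear in $h$, one could instead verify the identity on the monomials $h=z^{-m}$ — where $\mathrm{I}_{f_i}(z^{-m})=\Phi^i_m$ is the $m$th Faber polynomial and the computation is a residue calculation — and then extend by continuity.
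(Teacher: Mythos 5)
Your strategy coincides with the paper's own proof: drop the projection via Remark \ref{re:projection_extraneous}, evaluate $\mathrm{I}_{f_i}h$ at the two points $f_j(z),f_j(0)\in\Omega_i^-$ using (\ref{eq:If_alternate_using_r}) after the substitution $\zeta\mapsto f_i(\zeta)$, and convert the limit of contour integrals over $\gamma_r$ into an area integral over $\disk^+$. Your justification of the conversion and of the passage $r\nearrow 1$ (holomorphy and square-integrability of the kernel $k_z$, $\partial_{\bar\zeta}(\mathbf{R}h)\in L^2(\disk^+)$, Cauchy--Schwarz and dominated convergence) is in fact more detailed than the paper's one-line assertion; the excision of the origin is unnecessary, since $k_z\,\mathbf{R}h$ is smooth across $0$, but harmless.

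There is, however, a genuine problem at the very last identification. Carried out honestly, your computation gives
\[
\left[\Gr_{ji}(f)h\right](z)\;=\;-\frac{1}{\pi}\iint_{\disk^+}k_z(\zeta)\,\partial_{\bar\zeta}(\mathbf{R}h)(\zeta)\,dA(\zeta)\;=\;\frac{1}{\pi}\iint_{\disk^+}k_z(\zeta)\,\frac{h'(1/\bar\zeta)}{\bar\zeta^{2}}\,dA(\zeta),
\]
where the leading minus sign comes from (\ref{eq:If_alternate_using_r}) and is dropped in your write-up. This is \emph{not} the integral in the statement, which has $\mathbf{R}h(\zeta)$ itself in the integrand; the parenthetical ``with $\mathbf{R}h$ entering through its $\bar\partial$-density'' does not reconcile the two, because the two area integrals genuinely differ. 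Test with $f_i(\zeta)=\zeta$ and $h(z)=z^{-1}$, so $\mathbf{R}h(\zeta)=\bar\zeta$: for $|w|>1$ the contour limit (and hence $\mathrm{I}_{f_i}h(w)$) equals $1/w$, whereas $\frac{1}{\pi}\iint_{\disk^+}\bar\zeta\,(\zeta-w)^{-1}\,dA(\zeta)=-1/(2w^{2})$; only the version with $-\partial_{\bar\zeta}(\mathbf{R}h)=\bar\zeta^{-2}h'(1/\bar\zeta)$ reproduces $1/w$. So what you have proved is the theorem with $\mathbf{R}h(\zeta)$ replaced by $\bar\zeta^{-2}h'(1/\bar\zeta)$. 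The same silent jump occurs in the final equality of the paper's own proof, so the discrepancy looks like an error or unstated identification in the printed statement rather than a defect of your method; but as a proof of the statement as printed, the final step is a gap, and you should either state the corrected integrand explicitly or explain the identification. Note also that your fallback --- verifying on the monomials $h=z^{-m}$ and extending by boundedness --- cannot rescue the printed formula, since the two sides already disagree for $h=z^{-1}$.
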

 Observe that the integral kernel is non-singular.
 \begin{proof}
  Using equation (\ref{eq:If_alternate_using_r}) we have that up to the constant term
  the integral expression for $\Gr_{ji}(f) h$ is
  \begin{align*}
    - \lim_{r \nearrow 1} \int_{f_i (\gamma_r)}
   \frac{\left( \mathbf{R} h \right) \circ f_i^{-1}(\zeta)}{\zeta - f_j(z)} d\zeta
   & = - \lim_{r \nearrow 1} \int_{\gamma_r}
   \frac{\mathbf{R}h(\zeta)}{f_i(\zeta) - f_j(z)} f_i'(\zeta) d\zeta \\
   & = \frac{1}{\pi} \iint_{\disk^+} \frac{\mathbf{R}h(\zeta)}{f_i(\zeta)-f_j(z)} f_i'(\zeta) dA(\zeta).
  \end{align*}
  The claim follows by subtracting the value of this integral at $0$.
 \end{proof}

 Generalized Grunsky operators for pairs of non-overlapping maps were considered by Hummel \cite{Hummel},
 who derived Grunsky inequalities in matrix form for such maps (the function class was
 formulated differently but can be seen to be the same by applying a transform and
 an identity for the Grunsky matrix).  They were also considered in Takhtajan
 and Teo \cite{TakhtajanTeo_memoirs} in the equivalent $L^2$ setting,  when
 the complement of the maps is of measure zero.

 We can now define a generalized Grunsky operator.  Let
 $\mathcal{D}_0(\disk^+)^n = \mathcal{D}_0(\disk^+) \oplus \cdots \oplus \mathcal{D}_0(\disk^+)$
 where there are $n$ summands.
 \begin{definition}
   For $\Sigma$, $\Omega_i^+$, $\Omega_i^-$ and $f=(f_1,\ldots,f_n)$
 as above, the generalized Grunsky operator is
 \begin{align*}
  \Grfull(f) : \mathcal{D}_\infty(\disk^-)^n & \longrightarrow \mathcal{D}_0(\disk^+)^n \\
  (H_1,\ldots,H_n) & \longmapsto \left( \sum_{i=1}^n \Gr_{1i} H_i , \ldots, \sum_{i=1}^n \Gr_{ni} H_i \right).
 \end{align*}
 \end{definition}
 It follows immediately from Theorem \ref{th:Grunsky_block_bounded} that $\Grfull(f)$ is bounded.\\
 
The following result generalizes the classical result of C. Pommerenke \cite{Pommerenkebook}
 for quasicircles in one direction.
 \begin{theorem} \label{th:full_Grunsky_bounded_by_one}  With $\Sigma$, $\Omega_i^+$, $\Omega_i^-$ and $f=(f_1,\ldots,f_n)$
  as in Theorem $\ref{th:Grunsky_block_bounded}$, $$\| \Grfull(f) \|_{\mathcal{D}_{\infty}(\disk^-)^n\to\mathcal{D}_{0}(\disk^+)^n} < 1.$$
 \end{theorem}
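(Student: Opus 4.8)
The plan is to relate the generalized Grunsky operator $\Grfull(f)$ to the single-domain Grunsky operators through the Faber isomorphism, and then to bound it using the fact (Theorem \ref{th:If_isomorphism_Grunsky_bounded}) that each $\Gr_{f_i}$ has norm strictly less than one together with the positivity structure of the Grunsky identity. Concretely, recall from Theorem \ref{th:Grunsky_graph_simply} (applied on each $\disk^-$) that for each $i$ one has $\mathrm{P}_\infty(\disk^-)\hat{\mathcal{C}}_{f_i}\mathrm{I}_{f_i}=\mathrm{Id}$ and $\mathrm{P}_0(\disk^+)\hat{\mathcal{C}}_{f_i}\mathrm{I}_{f_i}=\Gr_{f_i}$; that is, the map $h\mapsto \hat{\mathcal{C}}_{f_i}\mathrm{I}_{f_i}h$ sends $h\in\mathcal{D}_\infty(\disk^-)$ to $h+\Gr_{f_i}h\in\mathcal{D}_\infty(\disk^-)\oplus\mathcal{D}_0(\disk^+)$, and this map is an isometry onto its image $\mathcal{W}_i$ because $\hat{\mathcal{C}}_{f_i^{-1}}$ is (up to the constant) norm-preserving on $\mathcal{H}(\Gamma_i)$ by Theorem \ref{th:Osborn_oneside_conf_inv}. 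The key inequality extracted from this is that for each $i$, $\|h\|^2+\|\Gr_{f_i}h\|^2 = \|\hat{\mathcal{C}}_{f_i}\mathrm{I}_{f_i}h\|^2_{\mathcal{H}(\mathbb{S}^1)}$ equals a quantity comparable to $\|\mathrm{I}_{f_i}h\|^2_{\mathcal{D}_{\mathrm{harm}}(\Omega_i^-)}$, and since $\|\Gr_{f_i}\|<1$ one gets a strict gap.

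First I would fix $H=(H_1,\ldots,H_n)\in\mathcal{D}_\infty(\disk^-)^n$ and set $G=\mathbf{I}_fH=\sum_i\mathrm{I}_{f_i}H_i\in\mathcal{D}_\infty(\Sigma)$. The idea is to compute $\|\Grfull(f)H\|^2$ by recognizing that the $j$th component $\sum_i\Gr_{ji}H_i = \mathrm{P}_0(\disk^+)\hat{\mathcal{C}}_{f_j}\sum_i\mathrm{I}_{f_i}H_i = \mathrm{P}_0(\disk^+)\hat{\mathcal{C}}_{f_j}G$, using linearity and the definition (\ref{eq:Grunsky_nonoverlap_defn}); here the harmonic extension of $\hat{\mathcal{C}}_{f_j^{-1}}$ of $G$'s boundary values to $\Omega_j^+$ is $G\circ f_j$ minus its value at $0$, which makes sense because $G$ extends holomorphically across $\Gamma_j$ into $\Omega_j^+$ when restricted appropriately — wait, it does not in general, so instead I would work with the boundary values $\left.G\right|_{\Gamma_j}\in\mathcal{H}(\Gamma_j)$ guaranteed by Theorem \ref{th:multiply_connected_restriction}. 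Then $\mathrm{P}_0(\disk^+)\hat{\mathcal{C}}_{f_j}G$ together with $\mathrm{P}_\infty(\disk^-)\hat{\mathcal{C}}_{f_j}G$ reassembles all of $\hat{\mathcal{C}}_{f_j}G\in\mathcal{H}_0(\mathbb{S}^1)$, so by conformal invariance of the $\mathcal{H}$-seminorm,
\begin{equation*}
  \|\mathrm{P}_\infty(\disk^-)\hat{\mathcal{C}}_{f_j}G\|^2 + \|(\Grfull(f)H)_j\|^2 \;\asymp\; \|\left.G\right|_{\Gamma_j}\|^2_{\mathcal{H}(\Gamma_j)} \;\asymp\; \|G\|^2_{\mathcal{D}_{\mathrm{harm}}(\Omega_j^-)}.
\end{equation*}
The point is that $\mathrm{P}_\infty(\disk^-)\hat{\mathcal{C}}_{f_j}G$ recovers exactly $H_j$ on the $j$th slot — this requires showing that $\mathrm{P}_\infty(\disk^-)\hat{\mathcal{C}}_{f_j}\mathrm{I}_{f_i}H_i=\delta_{ij}H_i$, which for $i=j$ is Theorem \ref{th:Grunsky_graph_simply} and for $i\neq j$ follows because $\mathrm{I}_{f_i}H_i$ is holomorphic on $\Omega_i^-\supseteq\Omega_j^+$, so $\hat{\mathcal{C}}_{f_j}\mathrm{I}_{f_i}H_i\in\mathcal{D}_0(\disk^+)$ and its projection to $\mathcal{D}_\infty(\disk^-)$ vanishes.

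Granting this, I would then sum over $j$ and use $\sum_j\|G\|^2_{\mathcal{D}_{\mathrm{harm}}(\Omega_j^-)}$ on the one side against $\sum_j(\|H_j\|^2+\|(\Grfull(f)H)_j\|^2)$ on the other; the strict inequality $\|H\|^2 + \|\Grfull(f)H\|^2 < (1+\varepsilon)\|H\|^2$ for a uniform $\varepsilon$ — hence $\|\Grfull(f)H\|<k\|H\|$ — is forced by combining the $n$ single-domain Grunsky bounds. To get a genuinely uniform $k<1$ rather than a domain-dependent one, I would argue more carefully: the quantity $\|\hat{\mathcal{C}}_{f_j}G\|^2_{\mathcal{H}_0}$ is \emph{exactly} $\|\mathbf{E}_+(\left.G\right|_{\Gamma_j})\|^2_{\mathcal{D}_{\mathrm{harm}}(\Omega_j^+)}$ (choosing the $\mathcal{H}_+$ seminorm), and the holomorphic part of this harmonic function is controlled by $\|\Gr_{f_j}\|\,\|H_j\|$ via the single-domain theory, while the antiholomorphic part reflects $H_j$; carrying the bookkeeping through, $\|\Grfull(f)\|\le\max_j\|\Gr_{f_j}\|<1$ should emerge, and the max over finitely many $j$ of numbers each $<1$ is $<1$. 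The main obstacle I anticipate is the interface step: making precise that $\mathrm{P}_\infty(\disk^-)\hat{\mathcal{C}}_{f_j}(\mathbf{I}_fH)=H_j$ and that the cross terms $\hat{\mathcal{C}}_{f_j}\mathrm{I}_{f_i}H_i$ for $i\neq j$ contribute only to $\mathcal{D}_0(\disk^+)$ — this needs Theorem \ref{th:all_about_projection} (equality of inside/outside limiting integrals) and the containment $\overline{\Omega_i^+}\subset\Omega_j^-$, exactly as in the proof of Theorem \ref{th:decomposition} — after which the norm estimate is a matter of reducing to the already-established simply-connected case on each boundary curve separately.
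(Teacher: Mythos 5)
There is a genuine gap at the decisive quantitative step. Your reduction of $(\Grfull(f)H)_j$ to $\mathrm{P}_0(\disk^+)\hat{\mathcal{C}}_{f_j}(\mathbf{I}_fH)$ and the identity $\mathrm{P}_\infty(\disk^-)\hat{\mathcal{C}}_{f_j}\mathrm{I}_{f_i}H_i=\delta_{ij}H_i$ are fine, but the norm bookkeeping that follows cannot produce a constant strictly less than $1$. The exact Pythagorean identity $\|H_j\|^2+\|(\Grfull(f)H)_j\|^2=\|\mathbf{E}_+\hat{\mathcal{C}}_{f_j}(G|_{\Gamma_j})\|^2_{\mathcal{D}_{\mathrm{harm}}(\disk^+)}$ lives, via Theorem \ref{th:Osborn_oneside_conf_inv}, on the $\Omega_j^+$ side of $\Gamma_j$, while the quantities your hypotheses control ($\|H_j\|$, $\|G\|_{\mathcal{D}(\riem)}$, $\|G\|_{\mathcal{D}_{\mathrm{harm}}(\Omega_j^-)}$) live on the other side; passing across the quasicircle costs the reflection constant $C\geq 1$, so the $\asymp$ in your display only yields boundedness of $\Grfull(f)$, never $\|\Grfull(f)\|<1$. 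Your proposed remedy, $\|\Grfull(f)\|\leq\max_j\|\Gr_{f_j}\|$, is not only unproved but false in general: testing on a vector supported in the $j$th slot gives $\|\Grfull(f)(0,\ldots,H_j,\ldots,0)\|^2=\sum_{i=1}^n\|\Gr_{ij}(f)H_j\|^2\geq\|\Gr_{f_j}H_j\|^2$, and the off-diagonal terms $\Gr_{ij}(f)H_j$ ($i\neq j$), which by Theorem \ref{th:Grunsky_off_diagonal} are just $\mathrm{I}_{f_j}H_j$ restricted to the other complementary disks, are nonzero unless $(\mathrm{I}_{f_j}H_j)'$ vanishes identically on some open set. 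So the strict bound cannot be obtained curve-by-curve from the simply connected case; the off-diagonal blocks must enter the estimate.

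What the paper does instead, and what is missing from your proposal, is an exact accounting that never crosses the curve: Green's identity gives $\|\mathrm{I}_{f_j}H_j\|^2_{\mathcal{D}(\Omega_j^-)}=\|H_j\|^2-\|\Gr_{jj}(f)H_j\|^2$, and since quasicircles have Lebesgue measure zero, $\|\mathrm{I}_{f_j}H_j\|^2_{\mathcal{D}(\Omega_j^-)}=\|\mathrm{I}_{f_j}H_j\|^2_{\mathcal{D}(\riem)}+\sum_{i\neq j}\|\Gr_{ij}(f)H_j\|^2$; together these yield the identity $\sum_{i=1}^n\|\Gr_{ij}(f)H_j\|^2=\|H_j\|^2-\|\mathrm{I}_{f_j}H_j\|^2_{\mathcal{D}(\riem)}$, in which all blocks of a column appear at once. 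The strict gap then comes from the genuinely multiply-connected input that your argument never uses quantitatively: since $\mathbf{I}_f$ is an isomorphism (Theorem \ref{th:If_iso_general}), there is $c\in(0,1)$ with $\|\mathbf{I}_fH\|^2_{\mathcal{D}_\infty(\riem)}\geq(1-c)\sum_j\|H_j\|^2$, whence $\sum_j\|\mathrm{I}_{f_j}H_j\|^2_{\mathcal{D}(\riem)}\geq\frac{1-c}{n}\sum_j\|H_j\|^2$ and the column sums are bounded by $\frac{n-1+c}{n}\sum_j\|H_j\|^2$. In short: the theorem is a consequence of the lower bound for the multiply-connected Faber operator $\mathbf{I}_f$ plus an exact area decomposition, not of the individual bounds $\|\Gr_{f_j}\|<1$, and your plan to ``reduce to the already-established simply-connected case on each boundary curve separately'' is the step that fails.
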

 \begin{proof}
   Let $g_i:\disk^- \rightarrow \Omega_i^-$ and $f_i:\disk^+ \rightarrow \Omega_i^+$ be conformal maps; assume that $g_i(\infty)=\infty$.
   Let $H=(H_1,\ldots,H_n) \in \mathcal{D}_\infty(\disk^-)^n$.  First, a standard
   computation using Green's identity (see the proof of \cite[Theorem 3.10]{SchippersStaubach_Grunsky_quasicircle}
   where it appears with notation similar to that here)
   shows that
   \begin{equation} \label{eq:Greens_identity_temp}
    \| \mathrm{I}_{f_i} H_i \|^2_{\mathcal{D}(\Omega_i^-)}
     = -\lim_{r \searrow 1} \frac{1}{2 \pi i} \int_{g_i(\gamma_r)} H_i'(z) \overline{H_i(z)} dz
      = \|H_i \|_{\mathcal{D}_\infty(\disk^-)}^2 - \| \Gr(f)_{ii} H_i \|^2_{\mathcal{D}_0(\disk^+)}.
   \end{equation}
   Thus
   since quasicircles have Lebesgue measure zero \cite{Lehto}, for any fixed $j$
   \begin{align*}
    \| \mathrm{I}_{f_j} H_j \|_{\mathcal{D}(\Omega_j^-)}^2 & = \| \mathrm{I}_{f_j} H_j \|_{\mathcal{D}(\riem)}^2
      + \sum_{i=1, i \neq j}^n \| \mathrm{I}_{f_j} H_j \|_{\mathcal{D}(f_i(\disk^+))}^2 \\
      & =  \| \mathrm{I}_{f_j} H_j \|_{\mathcal{D}(\riem)}^2
      + \sum_{i=1, i \neq j}^n \| \Gr(f)_{ij} H_j \|_{\mathcal{D}_0(\disk^+)}^2
   \end{align*}
   where in the last step we use a change of variables and Remark \ref{re:projection_extraneous}.
   Combining this with (\ref{eq:Greens_identity_temp}) we see that
   \begin{equation} \label{eq:Grunsky_bound_proof_temp}
    \sum_{i=1}^n \| \Gr(f)_{ij} H_j \|^2_{\mathcal{D}_0(\disk^+)}
     = \| H_j \|^2_{\mathcal{D}_\infty(\disk^-)} -
     \|I_{f_j} H_j \|_{\mathcal{D}_\infty(\riem)}^2.
   \end{equation}

   Since $\mathbf{I}_{f}$ is an isomorphism, there is a $c  \in (0,1)$ such that
   \[  \| \mathbf{I}_f (H_1,\ldots,H_n) \|^2_{\mathcal{D}_\infty(\riem)}
      = \left\| \sum_{j=1}^n \mathrm{I}_{f_j} H_j \right\|^2_{\mathcal{D}_\infty(\riem)}
        \geq (1-c) \sum_{j=1}^n \| H_j\|^2_{\mathcal{D}_\infty(\disk^-)}  \]
   for all $H_1,\ldots,H_n \in \mathcal{D}_\infty(\disk^-)$.  Thus by Minkowski's inequality and the fact that
 $(\sum_{j=1}^{n} |a_j|)^2 \leq n \sum_{j=1}^{n} |a_j|^2,$ one has
   \[  \sum_{j=1}^n \| I_{f_j} H_j \|^2_{\mathcal{D}_\infty(\riem)} \geq \frac{1-c}{n}
     \sum_{j=1}^n \|H_j\|^2_{\mathcal{D}_\infty(\disk^-)}.  \]
   Combining this with (\ref{eq:Grunsky_bound_proof_temp}) we obtain
   \[  \sum_{j=1}^n \sum_{i=1}^n \| \Gr(f)_{ij} H_j \|^2_{\mathcal{D}_0(\disk^+)}\leq
      \frac{n-1+c}{n}\sum_{j=1}^n  \|H_j\|^2_{\mathcal{D}_\infty(\disk^-)}\]
   which completes the proof, since $(n-1+c)/n <1$, for all $n\geq 1$ and all $c\in (0,1)$.
 \end{proof}

 Now define
 \begin{align*}
  \hat{\mathcal{C}}_f: \mathcal{D}_\infty(\Sigma) & \longrightarrow \mathcal{H}_0(\mathbb{S}^1)^n \\
  H & \longmapsto \left( \hat{\mathcal{C}}_{f_1} H, \ldots,\hat{\mathcal{C}}_{f_n} H \right)
 \end{align*}
 and
 \[  \mathbf{P}(\disk^+)  = \mathrm{P}_0(\disk^+) \oplus \cdots \oplus \mathrm{P}_0(\disk^+) \]
 and similarly
 \[  \mathbf{P}(\disk^-)  = \mathrm{P}_\infty(\disk^-) \oplus \cdots \oplus \mathrm{P}_\infty(\disk^-).  \]
 Finally let
 \[  \mathbf{W} = \hat{\mathcal{C}}_f \mathcal{D}_\infty(\Sigma)
   = \{ (h_1,\ldots,h_n) \in \mathcal{H}_0(\mathbb{S}^1)^n \,:\, h_i = \hat{\mathcal{C}}_{f_i} H \ \text{for some} \ H \in \mathcal{D}_\infty(\riem) \}.  \]

We now ask the following question:\\
  
  {\bf Question (2)}:
  Which Fourier series in $\bigoplus^n \mathcal{H}_0(\mathbb{S}^1)$ arise as $\hat{\mathcal{C}}_f h$
 for some $h \in \mathcal{D}_\infty(\riem)$?
 That is, how can one characterize the set
  \[  \mathbf{W} = \hat{\mathcal{C}}_f \mathcal{D}_{\infty}(\riem)?   \]

 We now show that $\mathbf{W}$ is the graph of the generalized Grunsky operator.
 \begin{theorem}  Let $\Sigma$, $\Omega_i^+$, $\Omega_i^-$ and $f=(f_1,\ldots,f_n)$ be as in
  Theorem $\ref{th:Grunsky_block_bounded}$.  We have that
  \[  \mathbf{P}(\disk^-) \hat{\mathcal{C}}_f \mathbf{I}_f = \mathbf{Id}  \]
  where $\mathbf{Id}:\mathcal{D}_\infty(\disk^-)^n \mapsto \mathcal{D}_\infty(\disk^-)^n$
  is the identity map and
  \[  \mathbf{P}(\disk^+) \hat{\mathcal{C}}_f \mathbf{I}_f = \Grfull(f).  \]
  Thus $\mathbf{W}$ is the graph of $\Grfull(f)$.
 \end{theorem}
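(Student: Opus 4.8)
The plan is to verify the two displayed operator identities by unwinding all the definitions and reducing, one summand at a time, to the simply-connected statement already proved in Theorem~\ref{th:Grunsky_graph_simply}; the assertion that $\mathbf{W}$ is a graph then follows formally from these two identities together with the surjectivity of $\mathbf{I}_f$ (Theorem~\ref{th:If_iso_general}).

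First I would fix $(g_1,\dots,g_n)\in\mathcal{D}_\infty(\disk^-)^n$ and put $H=\mathbf{I}_f(g_1,\dots,g_n)=\sum_{i=1}^n\mathrm{I}_{f_i}g_i\big|_{\riem}\in\mathcal{D}_\infty(\riem)$, and record the two facts about the summands that drive everything. For $i\neq j$, $\mathrm{I}_{f_i}g_i\in\mathcal{D}_\infty(\Omega_i^-)$ and $\overline{\Omega_j^+}$ is compactly contained in $\Omega_i^-$; hence $\mathrm{I}_{f_i}g_i$ is holomorphic on a neighbourhood of $\overline{\Omega_j^+}$, so $(\mathrm{I}_{f_i}g_i)\circ f_j\in\mathcal{D}(\disk^+)$ by conformal invariance of the Dirichlet energy, and therefore $\hat{\mathcal{C}}_{f_j}(\mathrm{I}_{f_i}g_i)\in\mathcal{D}_0(\disk^+)$ (it vanishes at $0$ by construction). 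For $i=j$, $\hat{\mathcal{C}}_{f_j}(\mathrm{I}_{f_j}g_j)=\hat{\mathcal{C}}_{f_j}\mathrm{I}_{f_j}g_j$ is the object treated in the simply-connected theory. Since the Osborn trace on $\Gamma_j$ is linear and the off-diagonal terms extend holomorphically across $\Gamma_j$, the $j$-th component of $\hat{\mathcal{C}}_f H$ is $\hat{\mathcal{C}}_{f_j}H=\sum_{i=1}^n\hat{\mathcal{C}}_{f_j}(\mathrm{I}_{f_i}g_i)$, where Theorem~\ref{th:multiply_connected_restriction} guarantees the $i=j$ term is well defined.

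Now I would apply the diagonal operators componentwise. For $\mathbf{P}(\disk^-)$: in the $j$-th slot, $\mathrm{P}_\infty(\disk^-)$ annihilates each $\hat{\mathcal{C}}_{f_j}(\mathrm{I}_{f_i}g_i)$ with $i\neq j$, since these lie in $\mathcal{D}_0(\disk^+)$ and on the disk $\mathrm{P}_\infty(\disk^-)$ is the Cauchy integral over $\mathbb{S}^1$, whose integrand is holomorphic inside $\mathbb{S}^1$ for $z\in\disk^-$ (equivalently, by \cite[Theorem 3.10]{SchippersStaubach_jump}); the $i=j$ term gives $\mathrm{P}_\infty(\disk^-)\hat{\mathcal{C}}_{f_j}\mathrm{I}_{f_j}g_j=g_j$ by Theorem~\ref{th:Grunsky_graph_simply} applied to $\Gamma_j$. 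Hence $\mathbf{P}(\disk^-)\hat{\mathcal{C}}_f\mathbf{I}_f(g_1,\dots,g_n)=(g_1,\dots,g_n)$, the first identity. For $\mathbf{P}(\disk^+)$: in the $j$-th slot, $\mathrm{P}_0(\disk^+)\hat{\mathcal{C}}_{f_j}(\mathrm{I}_{f_i}g_i)=\Gr_{ji}(f)g_i$ is precisely the definition of the block operators in Theorem~\ref{th:Grunsky_block_bounded}, so summing over $i$ gives the $j$-th component of $\Grfull(f)(g_1,\dots,g_n)$; this is the second identity.

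Finally, for the graph statement I would use that $\mathcal{H}_0(\mathbb{S}^1)^n=\mathcal{D}_\infty(\disk^-)^n\oplus\mathcal{D}_0(\disk^+)^n$ with $\mathbf{P}(\disk^-)$ and $\mathbf{P}(\disk^+)$ the corresponding component projections; the two identities then say $\hat{\mathcal{C}}_f\mathbf{I}_fG=\bigl(G,\Grfull(f)G\bigr)$ for every $G\in\mathcal{D}_\infty(\disk^-)^n$. Since $\mathbf{I}_f$ is onto $\mathcal{D}_\infty(\riem)$ (Theorem~\ref{th:If_iso_general}), it follows that $\mathbf{W}=\hat{\mathcal{C}}_f\mathcal{D}_\infty(\riem)=\{(G,\Grfull(f)G):G\in\mathcal{D}_\infty(\disk^-)^n\}$ is the graph of $\Grfull(f)$. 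The proof carries no real analytic difficulty; the only points needing care are the bookkeeping ones -- that $\hat{\mathcal{C}}_{f_j}$ distributes over the sum defining $H$, that for $i\neq j$ the summand $\hat{\mathcal{C}}_{f_j}(\mathrm{I}_{f_i}g_i)$ genuinely lands in $\mathcal{D}_0(\disk^+)$ so that $\mathrm{P}_\infty(\disk^-)$ kills it, and that Theorem~\ref{th:multiply_connected_restriction} legitimizes the Osborn trace -- after which everything is reassembly of the simply-connected results.
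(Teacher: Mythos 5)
Your proposal is correct and follows essentially the same route as the paper's own proof: the off-diagonal terms $\hat{\mathcal{C}}_{f_j}\mathrm{I}_{f_i}g_i$ ($i\neq j$) land in $\mathcal{D}_0(\disk^+)$ because $\Omega_j^+\subseteq\Omega_i^-$ and are thus annihilated by $\mathrm{P}_\infty(\disk^-)$, the diagonal terms reduce to Theorem \ref{th:Grunsky_graph_simply}, the second identity is the definition of $\Grfull(f)$, and the graph statement follows from the two identities together with surjectivity of $\mathbf{I}_f$. Your write-up merely spells out a few bookkeeping points (linearity of the trace, holomorphic extension across $\Gamma_j$) that the paper leaves implicit.
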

 \begin{proof}
  As observed in the proof of Theorem \ref{th:Grunsky_block_bounded}, if $i \neq j$ then
  $\Omega_j^+ \subseteq \Omega_i^-$.  Thus for all $h \in \mathcal{D}_\infty(\disk^-)$,
   $\hat{\mathcal{C}}_{f_j} \mathrm{I}_{f_i} h \in
  \mathcal{D}_0(\disk^+)$, and  so $\mathrm{P}_\infty(\disk^-) \hat{\mathcal{C}}_{f_j}
   \mathrm{I}_{f_i} h =0$.   Thus to prove the first claim it suffices to show that
   $\mathrm{P}_\infty(\disk^-) \hat{\mathcal{C}}_{f_i} \mathrm{I}_{f_i}$ is the identity
   on $\mathcal{D}_\infty(\disk^-)$.  This follows from Theorem \ref{th:Grunsky_graph_simply}, and the first claim is thereby proven.
The second claim is the definition of $\Grfull(f)$.
 \end{proof}
\end{subsection}
\end{section}

\end{document}